\numberwithin{equation}{section}
\DeclareMathOperator{\E}{\mathbb{E}}
\DeclareMathOperator{\Var}{Var}
\DeclareMathOperator*{\diag}{diag}
\DeclareMathOperator*{\tr}{tr}
\renewcommand{\Pr}[2][]{\mathbb{P}_{#1} \left\{ #2 \rule{0mm}{3mm}\right\}}
\newcommand{\ip}[2]{\langle#1,#2\rangle}
\def \R {\mathbb{R}}
\def \CC {\mathcal{C}}
\def \MM {\mathcal{M}}
\def \e {\varepsilon}
\def \l {\lambda}
\def \s {\sigma}
\def \tran {\mathsf{T}}
\def \one {{\textbf I}}
\def \onevector {{\bf 1}}
\def \pbar {\bar{p}}
\def \xbar {\bar{x}}
\def \Abar {\bar{A}}
\def \Zbar {\bar{Z}}
\def \Pbar {\bar{P}}
\def \xhat {\widehat{x}}
\def \Zhat {\widehat{Z}}
\def \Phat {\widehat{P}}
\def \Gr {\mathrm{G}}
\def \Mgr {\MM_\Gr}
\def \Mopt {\MM_{\mathrm{opt}}}
\def \In {\mathrm{In}}
\def \Out {\mathrm{Out}}
\newtheorem{theorem}{Theorem}[section]
\newtheorem{corollary}[theorem]{Corollary}
\newtheorem{lemma}[theorem]{Lemma}
\newtheorem{fact}[theorem]{Fact}
\theoremstyle{remark}
\newtheorem{remark}[theorem]{Remark}
\title[]{Community detection in sparse networks via Grothendieck's inequality}
\author{Olivier Gu\'edon \and Roman Vershynin}
\address{Universit\'{e} Paris-Est,
  Laboratoire d'Analyse et Math\'{e}matiques Appliqu\'ees (UMR 8050),
  UPEMLV, F-77454, Marne-la-Vall\'ee Cedex 2, France}
\address{Department of Mathematics, University of Michigan, 530 Church St., Ann Arbor, MI 48109, U.S.A.}
\thanks{O. G. is supported by the ANR project GeMeCoD, ANR 2011 BS01 007 01.
  R. V. is partially supported by NSF grants 1265782 and U.S. Air Force grant FA9550-14-1-0009.}
\date{\today}
\begin{document}

\begin{abstract}
  We present a simple and flexible method to prove consistency of semidefinite 
  optimization problems on random graphs.  
  The method is based on Grothendieck's inequality.
  Unlike the previous uses of this inequality that lead to constant relative accuracy, 
  we achieve any given relative accuracy by leveraging randomness.
  We illustrate the method with the problem of community detection in sparse networks,
  those with bounded average degrees.
  We demonstrate that even in this regime, various simple and natural semidefinite programs 
  can be used to recover the community structure up to an arbitrarily small fraction 
  of misclassified vertices. The method is general; it can be applied to a variety of
  stochastic models of networks and semidefinite programs.
\end{abstract}

\maketitle

\section{Introduction}
%--------------

\subsection{Semidefinite problems on random graphs}				\label{s: semidefinite}
%................

In this paper we present a simple and general method to prove consistency 
of various semidefinite optimization problems on random graphs.

Suppose we observe one instance of an $n \times n$ symmetric random matrix $A$ 
with unknown expectation $\bar{A} := \E A$. We would like to estimate the solution of
the discrete optimization problem
\begin{equation}							\label{eq: opt x Abar x}
\text{maximize } x^\tran \bar{A} x 
\quad \text{subject to} \quad
x \in \{-1,1\}^n.
\end{equation}
A motivating example of $A$ is the adjacency matrix of a random graph;
the Boolean vector $x$ can represent a partition of vertices of the graph into two classes.
Such Boolean problems can be encountered in the context of 
{\em community detection in networks} which we will discuss shortly. 
For now, let us keep working with the general class of problems \eqref{eq: opt x Abar x}.

Since $\bar{A}$ is unknown, one might hope to estimate 
the solution $\xbar$ of \eqref{eq: opt x Abar x} by 
solving the random instance of this problem, that is 
\begin{equation}							\label{eq: opt x A x}
\text{maximize } x^\tran A x 
\quad \text{subject to} \quad
x \in \{-1,1\}^n.
\end{equation}
The integer quadratic problem \eqref{eq: opt x A x} 
is NP-hard for general (non-random) matrices $A$. 
Semidefinite relaxations of many problems of this type have been proposed; see 
\cite{Goemans-Williamson, Lovasz-Schrijver, Nesterov, Alon-Naor} and the references therein.
Such relaxations are known to have {\em constant} relative accuracy.
For example, a semidefinite relaxation in \cite{Alon-Naor} computes, 
for any given positive semidefinite matrix $A$, a vector $x_0 \in \{-1,1\}^n$ such that
$x_0^\tran A x_0 \ge 0.56 \, \max_{x \in \{-1,1\}^n} x^\tran A x$.

In this paper we demonstrate how semidefinite relaxations of \eqref{eq: opt x A x} 
can recover a solution of \eqref{eq: opt x Abar x} with {\em any given relative accuracy}. 
Like several previously known methods, 
our approach is based on Grothendieck's inequality. We refer the reader to 
the surveys \cite{Pisier, Khot-Naor} for many reformulations and applications of this 
inequality in mathematics, computer science, optimization and other fields.
In contrast to the previous methods, 
we are going to apply Grothendieck's inequality for the (random) error $A - \bar{A}$ rather 
that the original matrix $A$, and this will be responsible for the arbitrary accuracy.

We will describe the general method in Section~\ref{s: method}. 
It is simple and flexible, and it can be used for showing consistency 
of a variety of semidefinite programs, which may or may not be related 
to Boolean problems like \eqref{eq: opt x Abar x}. 
But before describing the method, we would like to pause and give some 
concrete examples of results it yields for  {\em community detection}.

For simplicity, we will first focus on the classical stochastic block model, 
which is a random network whose nodes are split into two equal-sized clusters.
In Section~\ref{s: block model general} we will extend our discussion 
for broader models of networks almost without extra effort.

\subsection{Community detection: the classical stochastic block model}			
%.............

It is now customary to model networks as inhomogeneous random graphs \cite{Bollobas-Janson-Riordan}, 
which generalize the classical Erd\"os-R\'enyi model $G(n,p)$.
A benchmark example is the stochastic block model \cite{Holland-Laskey-Leinhardt}.
In this section we focus on the basic model with two communities of equal sizes; 
in Section~\ref{s: block model general} we will consider a more general situation.

We define a random graph on vertices $\{1,\ldots,n\}$ as follows.
Partition the set of vertices into two communities $\CC_1$ and $\CC_2$ 
of size $n/2$ each. 
For each pair of distinct vertices, 
we draw an edge independently with probability $p$ if both vertices belong 
to the same community, and $q$ (with $q \le p$) if they belong to different communities.   
For convenience we include the loops, so each vertex
has an edge connecting it to itself with probability $1$. 
This defines a distribution on random graphs which is denoted $G(n,p,q)$
and called the (classical) {\em stochastic block model}.
When $p=q$, we recover the classical Erd\"os-R\'enyi model of random graphs $G(n,p)$. 

The {\em community detection problem} asks to recover the communities $\CC_1$ and $\CC_2$
by observing one instance of a random graph drawn from $G(n,p,q)$.
As we will discuss in detail in Section~\ref{s: previous}, an array of algorithms is known to succeed for this 
problem for {\em relatively dense graphs}, those whose expected average degree 
(which is of order $pn$) is $\Omega(\log n)$, while less is known for {\em totally sparse graphs} -- 
those with bounded average degrees, i.e. with $pn = O(1)$. 
Our paper focuses on this sparse regime. 

\medskip

Recovery of the communities $\CC_1$ and $\CC_2$  
is equivalent to estimating the community membership vector, which we can define as
\begin{equation}							\label{eq: xbar}
\xbar \in \{-1,1\}^n, \quad
\xbar_i = 
\begin{cases}
  \phantom{-}1, & i \in \CC_1 \\
  -1, & i \in \CC_2. 
\end{cases}
\end{equation}
We will estimate $\bar{x}$ using 
the following semidefinite optimization problem:
\begin{equation}			\label{eq: SDP lambda}
\begin{aligned}
  &\text{maximize } \ip{A}{Z} - \l \ip{E_n}{Z} \\
  &\text{subject to } Z \succeq 0, \; \diag(Z) \preceq \one_n.
\end{aligned}
\end{equation}
Here  the inner product of matrices is defined in the usual way, 
that is $\ip{A}{B} = \tr(AB) = \sum_{i,j} A_{ij} B_{ij}$,  $\one_n$ denotes the identity matrix, 
the matrix $E_n$ has all entries equal $1$, 
and $A \succeq B$ means that $A-B$ is positive semidefinite. Observe that $E_n = \onevector_n \onevector_n^\tran$ 
where $\onevector_n \in \R^n$ is the vector whose all coordinates equal $1$. The 
constraint $\diag(Z) \preceq \one_n$ in \eqref{eq: SDP lambda} simply means that 
all diagonal entries of $Z$ are bounded by $1$.

For the value of $\l$ we choose the average degree of the graph (with loops removed), 
which is 
\begin{equation}							\label{eq: lambda}
\l = \frac{2}{n(n-1)} \sum_{i < j} a_{ij}
\end{equation}
where $a_{ij} \in \{0,1\}$ denote the entries of the adjacency matrix $A$.

\begin{theorem}[Community detection in classical stochastic block model]		\label{thm: community detection}
  Let $\e \in (0,1)$ and $n \ge 10^4 \e^{-2}$. 
  Let $A$ be the adjacency matrix of the random graph drawn from the stochastic 
  block model $G(n,p,q)$ with $\max\{p(1-p), q(1-q)\} \ge \frac{20}{n}$.
  Assume that $p=\frac{a}{n} > q=\frac{b}{n}$, and
  \begin{equation}							\label{eq: ab}
  (a-b)^2 \ge 1 0^4 \, \e^{-2} (a+b).
  \end{equation}
  Let $\Zhat$ be a solution of the semidefinite program \eqref{eq: SDP lambda}.
  Then, with probability at least $1-  e^3 5^{-n}$, we have
  \begin{equation}							\label{eq: Zhat error}
  \|\Zhat - \xbar \xbar^\tran\|_2^2 \le \e n^2 = \e \| \xbar \xbar^\tran \|_2^2.
  \end{equation}
\end{theorem}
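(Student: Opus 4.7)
The plan is to use $\Zbar := \xbar\xbar^\tran$ as a reference feasible point (it is PSD of rank one with $\diag(\Zbar) = \one_n$) and to exploit the optimality of the SDP solution $\Zhat$ against $\Zbar$. Using the block structure of $\Abar$,
$$\Abar = \frac{p-q}{2}\, \xbar\xbar^\tran + \frac{p+q}{2}\, E_n,$$
and setting $\tilde\l := (p+q)/2$, the denoised objective is $\ip{\Abar - \tilde\l E_n}{Z} = \tfrac{p-q}{2}\, \xbar^\tran Z \xbar$. For every feasible $Z$, the entrywise PSD bound $Z_{ij}^2 \le Z_{ii} Z_{jj} \le 1$ gives $\xbar^\tran Z \xbar = \sum_{i,j} Z_{ij}\, \xbar_i \xbar_j \le n^2$, with equality at $\Zbar$, so $\Zbar$ maximizes the denoised objective. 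Equal community sizes also give $\onevector_n^\tran \xbar = 0$ and hence $\ip{E_n}{\Zbar} = 0$.

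The SDP optimality inequality $\ip{A - \l E_n}{\Zhat - \Zbar} \ge 0$, combined with this decomposition, rearranges into
$$\frac{p-q}{2}\bigl(n^2 - \xbar^\tran \Zhat \xbar\bigr)
\;\le\; \ip{A - \Abar}{\Zhat - \Zbar} \;+\; (\tilde\l - \l)\, \ip{E_n}{\Zhat - \Zbar}.$$
Both terms on the right will be bounded uniformly over the feasible PSD body $\{Z \succeq 0,\, \diag(Z) \preceq \one_n\}$.

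For the random term I would invoke Grothendieck's inequality, which controls
$$\sup_{Z \succeq 0,\, \diag(Z) \preceq \one_n}\, \bigl|\ip{A - \Abar}{Z}\bigr|
\;\le\; K_G\, \|A - \Abar\|_{\infty \to 1}
\;=\; K_G \max_{x,y \in \{-1,1\}^n} x^\tran (A - \Abar)\, y.$$
The central probabilistic step is to show that this cut norm is at most $C n\sqrt{a+b}$ with probability at least $1 - e^3 5^{-n}$. For each fixed pair $(x,y)$, $x^\tran (A - \Abar) y$ is a sum of independent centered random variables bounded by $1$ with total variance of order $n(a+b)$, so Bernstein's inequality gives a Gaussian-type tail at scale $n\sqrt{a+b}$; a union bound over the $4^n$ sign pairs closes the estimate once $C$ is large enough to beat $\log 4 + \log 5$. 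For the second term, the sample average degree $\l$ concentrates around $\tilde\l$ at rate $O(\sqrt{(a+b)/n^3})$ by a Chernoff bound on the edge count, and combined with $|\ip{E_n}{\Zhat - \Zbar}| \le n^2$ this is of strictly lower order than $n\sqrt{a+b}$.

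Feeding $(p-q)/2 = (a-b)/(2n)$ and the above bounds into the main inequality gives $n^2 - \xbar^\tran \Zhat \xbar \lesssim n^2 \sqrt{a+b}/(a-b)$, which under \eqref{eq: ab} is at most $(\e/2)n^2$ once the universal constants $K_G$ and the Bernstein constants are absorbed into the factor $10^4$. Finally, every feasible $Z$ satisfies $\|Z\|_F^2 \le \|Z\|_{\mathrm{op}}\, \tr(Z) \le n^2$, so
$$\|\Zhat - \Zbar\|_F^2 \;=\; \|\Zhat\|_F^2 + n^2 - 2\,\xbar^\tran \Zhat \xbar \;\le\; 2\bigl(n^2 - \xbar^\tran \Zhat \xbar\bigr) \;\le\; \e n^2,$$
which is \eqref{eq: Zhat error}. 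The main obstacle is the cut-norm concentration: in the bounded-degree regime $a, b = O(1)$, the operator norm of $A - \Abar$ is dominated by atypical high-degree vertices and is much larger than $n\sqrt{a+b}$, so no purely spectral argument can work -- it is precisely the $\ell_\infty \to \ell_1$ norm that concentrates at the right scale, which is why Grothendieck's inequality is indispensable here.
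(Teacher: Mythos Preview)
Your proposal is correct and follows essentially the same route as the paper: identify $\Zbar=\xbar\xbar^\tran$ as the maximizer of the ``denoised'' objective $\ip{R}{Z}$ with $R=\tfrac{p-q}{2}\,\Zbar$, use SDP optimality together with Grothendieck's inequality to bound $n^2-\xbar^\tran\Zhat\xbar$ by a constant times $\|A-\E A\|_{\infty\to 1}/(p-q)$, control the cut norm via Bernstein plus a union bound over $\{-1,1\}^n\times\{-1,1\}^n$, and finish by expanding $\|\Zhat-\Zbar\|_2^2$ using $\|\Zhat\|_2^2\le n^2$. One small slip: your formula for $\Abar$ omits the diagonal correction $(1-p)\one_n$ coming from the self-loops (so $\E A_{ii}=1$, not $p$), and relatedly $\E\lambda=\tfrac{p+q}{2}-\tfrac{p-q}{n-1}$ rather than exactly $\tfrac{p+q}{2}$; both corrections contribute terms of order at most $n$ and are absorbed into the $O(n\sqrt{a+b})$ bound exactly as in the paper.
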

Here and in the rest of this paper, $\|\cdot\|_2$ denotes the Frobenius norm of matrices
and the Euclidean norm of vectors. 
\medskip

Once we have estimated the rank-one matrix $\bar{x} \bar{x}^\tran$ using Theorem~\ref{thm: community detection}, 
we can also estimate the community membership vector $\bar{x}$ itself 
in a standard way, namely by computing the leading eigenvector. 

\begin{corollary}[Community detection with $o(n)$ misclassified vertices]		\label{cor: vector recovery}
  In the setting of Theorem~\ref{thm: community detection}, 
  let $\xhat$ denote an eigenvector of $\Zhat$ 
  corresponding to the largest eigenvalue, and with $\|\xhat\|_2 = \sqrt n$. Then 
  $$
  \min_{\alpha = \pm 1} \|\alpha \xhat - \xbar\|_2^2 \le \e n = \e \|\xbar\|_2^2.
  $$
  In particular, the signs of the coefficients of $\xhat$ correctly estimate the 
  partition of the vertices into the two communities, up to at most $\e n$ misclassified vertices. 
\end{corollary}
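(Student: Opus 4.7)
The plan is to deduce this corollary from Theorem~\ref{thm: community detection} by a standard rank-one perturbation argument. The matrix $\xbar\xbar^\tran$ is rank-one PSD with unique nonzero eigenvalue $n$ and unit top eigenvector $u := \xbar/\sqrt{n}$, and since $\Zhat$ is Frobenius-close to it by \eqref{eq: Zhat error}, the leading eigenvector of $\Zhat$ must be close to $u$, which is essentially the content of the corollary.

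First I would locate the top eigenvalue $\lambda_1 := \lambda_1(\Zhat)$. The constraints $\Zhat \succeq 0$ and $\diag(\Zhat) \preceq \one_n$ give $\lambda_1 \le \tr \Zhat \le n$, while plugging in the test vector $u$ yields
\[
  \lambda_1 \ge u^\tran \Zhat u
  = n + u^\tran(\Zhat - \xbar\xbar^\tran) u
  \ge n - \|\Zhat - \xbar\xbar^\tran\|_2
  \ge (1 - \sqrt{\e})\,n.
\]
Next, let $v_1$ be a unit top eigenvector of $\Zhat$, so that $\xhat = \pm\sqrt{n}\,v_1$. By the Eckart--Young theorem, $\lambda_1 v_1 v_1^\tran$ is the best rank-one Frobenius approximant to $\Zhat$, so the triangle inequality gives $\|\lambda_1 v_1 v_1^\tran - \xbar\xbar^\tran\|_2 \le 2\sqrt{\e}\,n$. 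Expanding the squared left-hand side as $(\lambda_1 - n)^2 + 2n\lambda_1\bigl(1 - \ip{v_1}{u}^2\bigr)$ and using the two-sided bound on $\lambda_1$ from the previous step reads off $1 - \ip{v_1}{u}^2 \le 2\e/(1-\sqrt\e)$.

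Translating to $\xhat$: since $|\ip{\xhat}{\xbar}| = n\,|\ip{v_1}{u}|$, one obtains
\[
  \min_{\alpha = \pm 1}\|\alpha\xhat - \xbar\|_2^2
  = 2n\bigl(1 - |\ip{v_1}{u}|\bigr)
  \le 2n\bigl(1 - \ip{v_1}{u}^2\bigr)
  = O(\e)\,n.
\]
For the final assertion, any vertex $i$ with $\sign(\alpha\xhat_i) \ne \xbar_i$ contributes at least $1$ to $\|\alpha\xhat - \xbar\|_2^2$ (since $\xbar_i = \pm 1$ and $\alpha\xhat_i$ has the opposite sign), so the misclassification count is bounded by the same quantity.

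The only obstacle is cosmetic: the Eckart--Young-plus-triangle-inequality detour costs a numerical factor of about $4$, so the calculation above produces a bound of the form $C\e\,n$ rather than exactly $\e\,n$. Matching the stated constant amounts to rescaling $\e$ at the outset (harmless since $\e$ is arbitrary in Theorem~\ref{thm: community detection}), or to substituting a sharper Davis--Kahan sin-$\theta$ inequality that exploits the spectral gap $\lambda_1(\xbar\xbar^\tran) - \lambda_2(\Zhat) \ge (1-\sqrt\e)n$ between the top eigenvalue of $\xbar\xbar^\tran$ and the second eigenvalue of $\Zhat$.
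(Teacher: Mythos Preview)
Your argument is correct and self-contained. The paper's own proof takes the same overall strategy---deduce eigenvector stability from the Frobenius bound \eqref{eq: Zhat error}---but invokes the Davis--Kahan theorem as a black box (citing a corollary in \cite{Vu}) rather than going through Eckart--Young and the explicit expansion $\|\lambda_1 v_1 v_1^\tran - n u u^\tran\|_2^2 = (\lambda_1 - n)^2 + 2n\lambda_1(1-\ip{v_1}{u}^2)$ as you do. Your route is more elementary and transparent, at the cost of the extra factor you flag; the paper's proof, for what it is worth, also lands on $\|\vhat - \bar v\|_2 \le C\sqrt{\e}$ with an unspecified constant $C$, so the discrepancy with the stated bound $\e n$ is present there too and is handled implicitly by the same rescaling of $\e$ you suggest. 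Your closing remark that Davis--Kahan with the gap $n$ gives the sharper form is exactly what the paper does.
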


%Recently, E.~Mossel, J.~Neeman and A.~Sly \cite{Mossel-Neeman-Sly lower, Mossel-Neeman-Sly} and 
%Massouile \cite{Massoulie} proved an symptotic result: as $n$ goes to infinity,  one can find a partition {\em correlated} with the true community partition 
%(i.e. with the fraction of misclassified vertices bounded away from $50\%$) 
%if and only if $(a-b)^2 \ge 2 (a+b)$. Our result is non asymptotic and states that for a given proportion of missclassification $\e \in (0,1)$, if 
%$ (a-b)^2 \ge 10^4 \e^{-2} (a+b)$, our SDP algorithm 
%estimates the  partition of the vertices into the two communities, up to at most $\e n$ misclassified vertices. 

As we will discuss in Section~\ref{s: totally sparse} in more detail, there are previously known
algorithms for recovery of two communities under conditions similar to \eqref{eq: ab}.
These include a spectral clustering algorithm based on truncating the high degree vertices (whose analysis 
can be derived from \cite{FKS, Feige-Ofek}), combinatorial algorithms of \cite{Massoulie, Mossel-Neeman-Sly} 
based on path counting, and an algorithm \cite{Mossel-Neeman-Sly-belief} based on belief propagation, 
which minimizes the fraction of misclassified vertices.

An array of simple semidefinite programs like \eqref{eq: SDP lambda} and \eqref{eq: Zbar membership} 
has been proposed in networks community. Such programs have been analyzed for relatively dense graphs;
see \cite{Amini-Levina} for a review. It has been unknown if they could succeed for totally sparse graphs, where the expected degree 
is of constant order. 
Theorem~\ref{thm: community detection} provides a positive answer to this question. 
Moreover, the method of this paper is flexible enough to analyze many semidefinite programs, and it
can be applied for more general models of sparse networks than any previous results.

To illustrate this point, we will now choose a different semidefinite program and show that it succeeds 
for a large class of stochastic models of networks.
Moreover, in Section~\ref{s: balanced planted partition} 
we will see that a minor modification of the semidefinite program \eqref{eq: SDP lambda} 
also works well for multiple communities of equal sizes.

\subsection{Community detection: general stochastic block models}		\label{s: block model general}
%............		

Let us describe a model of networks where one can have
multiple communities of arbitrary sizes, arbitrarily many outliers, 
and unequal edge probabilities. 

To define such {\em general stochastic block model}, we assume that the set of 
vertices $\{1,\ldots,n\}$ is partitioned into communities
$\CC_1,\ldots,\CC_K$ of arbitrary sizes. We do not restrict the 
sizes of the communities, so in particular this model can automatically handle outliers,
the vertices that form communities of size $1$. 
For each pair of distinct vertices $(i,j)$, we draw an edge 
between $i$ and $j$ independently and with certain fixed probability $p_{ij}$. 
For convenience we include the loops like in  the classical stochastic block model, so $p_{ii} = 1$. 
To promote more edges within than across the communities, 
we assume that there exist numbers $p > q$ (thresholds) such that 
\begin{equation}         \label{eq: thresholds}
\begin{aligned}
&p_{ij} \ge p \quad \text{if $i$ and $j$ belong to the same community};\\
&p_{ij} \le q \quad \text{if $i$ and $j$ belong to different communities}.
\end{aligned}
\end{equation}
The community structure of such a network is captured by
the cluster matrix matrix $\bar{Z} \in \{0,1\}^{n \times n}$ defined as
\begin{equation}							\label{eq: Zbar membership}
\bar{Z}_{ij} = 
\begin{cases}
  1 & \text{if $i$ and $j$ belong to the same community}; \\
  0 & \text{if $i$ and $j$ belong to different communities}.
\end{cases}
\end{equation}
We will estimate $\bar{Z}$ using 
the following semidefinite optimization program:
\begin{equation}							\label{eq: SDP sum fixed}
\begin{aligned}
  &\text{maximize } \ip{A}{Z} \\
  &\text{subject to } Z \succeq 0, \; Z \ge 0, \; \diag(Z) \preceq \one_n, \; \textstyle{\sum_{i,j=1}^n Z_{ij} = \l}.
\end{aligned}
\end{equation}
Here as usual $Z \succeq 0$ means that $Z$ is positive semidefinite, 
and $Z \ge 0$ means that all entries of $Z$ are non-negative.
We choose the value of $\lambda$ to be the number of elements
in the cluster matrix, that is 
\begin{equation}         \label{eq: lambda general}
\l = \sum_{i,j=1}^n \bar{Z}_{ij} = \sum_{k=1}^K |\CC_k|^2.
\end{equation}
If all communities have the same size $s$, then $\lambda = K s^2 = n s$.

\begin{theorem}[Community detection in general stochastic block model]    \label{thm: community detection general}
  Let $\e \in (0,1)$.
  Let $A$ be the adjacency matrix of the random graph drawn from the general stochastic 
  block model described above. Denote by $\pbar$ the expected variance of the edges, that is 
  $\pbar = \frac{2}{n(n-1)} \sum_{i<j} p_{ij} (1 - p_{ij})$.
  Assume that $p=\frac{a}{n} > q=\frac{b}{n}$, $\pbar = \frac{g}{n}$, $g \ge 9$ and  
  \begin{equation}							\label{eq: ab general}
  (a-b)^2 \ge 484\, \e^{-2} g.
  \end{equation}
  Let $\Zhat$ be a solution of the semidefinite program \eqref{eq: SDP sum fixed}.
  Then, with probability at least $1-e^3 5^{-n}$, we have
  \begin{equation}							\label{eq: Zhat error general}
  \|\Zhat - \bar{Z}\|_2^2 \le \|\Zhat - \bar{Z}\|_1 
  \le \e n^2.
  \end{equation}
\end{theorem}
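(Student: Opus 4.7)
The plan is to implement the general Grothendieck-based strategy on the specific program \eqref{eq: SDP sum fixed}. First, I observe that $\Zbar$ is itself feasible: it is a Gram matrix of community indicators, so $\Zbar\succeq 0$ and $\Zbar\ge 0$; its diagonal equals $\one_n$; and $\sum_{i,j}\Zbar_{ij}=\l$ by the choice \eqref{eq: lambda general}. Optimality of $\Zhat$ thus gives $\ip{A}{\Zhat-\Zbar}\ge 0$, which upon writing $A=\Abar+(A-\Abar)$ rearranges into the key comparison
\[
\ip{\Abar}{\Zbar-\Zhat}\;\le\;\ip{A-\Abar}{\Zhat-\Zbar}.
\]
The plan is to lower bound the signal side by a multiple of $\|\Zhat-\Zbar\|_1$ and to upper bound the noise side via Grothendieck's inequality applied to the centered matrix $A-\Abar$; it is this centering, rather than applying Grothendieck to $A$ itself, that unlocks arbitrary relative accuracy.

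For the signal side, I introduce the reference matrix $M:=qE_n+(p-q)\Zbar$, whose entries equal $p$ on within-community positions and $q$ on cross-community positions. By \eqref{eq: thresholds}, $\Abar-M$ is entrywise $\ge 0$ on within positions (the diagonal being covered by $\Abar_{ii}=1\ge p$) and $\le 0$ on cross positions. On the other side, positive semidefiniteness applied to $2\times 2$ principal minors together with $\diag(\Zhat)\preceq\one_n$ and $\Zhat\ge 0$ forces every entry of $\Zhat$ into $[0,1]$, so $\Zbar-\Zhat$ has the opposite sign pattern, giving $\ip{\Abar-M}{\Zbar-\Zhat}\ge 0$ entrywise. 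It then remains to evaluate $\ip{M}{\Zbar-\Zhat}$: the piece $q\,\ip{E_n}{\Zbar-\Zhat}$ vanishes because of the constraint $\sum_{i,j}Z_{ij}=\l$, and the piece $(p-q)\,\ip{\Zbar}{\Zbar-\Zhat}$ equals $\tfrac{p-q}{2}\|\Zhat-\Zbar\|_1$ by a short bookkeeping argument using the identity $\sum_{\text{within}}(1-\Zhat_{ij})=\sum_{\text{cross}}\Zhat_{ij}$, itself a consequence of $\sum_{i,j}\Zhat_{ij}=\l=\sum_{\text{within}}1$.

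For the noise side, both $\Zhat$ and $\Zbar$ belong to the Grothendieck set $\Mgr=\{Z:Z\succeq 0,\ \diag(Z)\preceq\one_n\}$, so Grothendieck's inequality gives
\[
\ip{A-\Abar}{\Zhat-\Zbar}\;\le\;2\sup_{Z\in\Mgr}|\ip{A-\Abar}{Z}|\;\le\;2K_G\,\|A-\Abar\|_{\infty\to 1}.
\]
What remains is to concentrate the cut norm $\|A-\Abar\|_{\infty\to 1}=\max_{s,t\in\{-1,1\}^n}\sum_{i,j}(A_{ij}-\Abar_{ij})s_it_j$. Because $A_{ii}=\Abar_{ii}=1$, only off-diagonal terms contribute, and for each fixed $(s,t)$ the sum is a centered sum of independent Bernoulli-type increments bounded by $2$ with total variance of order $n^2\pbar=ng$. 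Bernstein's inequality therefore yields a sub-Gaussian tail at scale $n\sqrt g$ for each fixed pair, and the assumption $g\ge 9$ is precisely what makes this scale dominate the entropy $\log 4^n=2n\log 2$ of the sign set; a union bound then gives $\|A-\Abar\|_{\infty\to 1}\le c\,n\sqrt g$ with the advertised probability $1-e^35^{-n}$.

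Chaining the two estimates yields $\|\Zhat-\Zbar\|_1\le Cn^2\sqrt g/(a-b)$, which is at most $\e n^2$ under \eqref{eq: ab general}. The Frobenius bound in \eqref{eq: Zhat error general} then comes for free, since every entry of $\Zhat-\Zbar$ lies in $[-1,1]$ and hence $(\Zhat-\Zbar)_{ij}^2\le|(\Zhat-\Zbar)_{ij}|$ entrywise. The structural part of the argument is short and essentially forced; the main obstacle is purely quantitative, namely tracking the Grothendieck constant, the Bernstein constants, and the $4^n$ union-bound exponent tightly enough to land on the specific threshold $484\,\e^{-2}g$ and the precise failure probability $e^35^{-n}$ claimed in the statement.
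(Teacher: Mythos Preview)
Your proposal is correct and follows essentially the same route as the paper: the signal lower bound $\ip{\Abar}{\Zbar-\Zhat}\ge\tfrac{p-q}{2}\|\Zbar-\Zhat\|_1$ is exactly the paper's Lemma~\ref{lem: distinguishes points} (your introduction of $M=qE_n+(p-q)\Zbar$ just repackages the In/Out split there), the noise bound via Grothendieck plus Bernstein/union-bound is the paper's Lemma~\ref{lem: almost max} combined with Lemma~\ref{lem: deviation}, and the $\ell_2^2\le\ell_1$ step is identical. Your direct use of optimality ($\ip{A}{\Zhat-\Zbar}\ge 0$ from feasibility of $\Zbar$) is in fact a slight streamlining, since it bypasses the separate verification in Lemma~\ref{lem: Zbar sum fixed} that $\Zbar$ is the reference maximizer.
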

Here as usual $\|\cdot\|_2$ denotes the Frobenius norm of matrices, and $\|\cdot\|_1$ 
denotes the $\ell_1$ norm of the matrices considered as vectors, that is 
$\|(a_{ij})\|_1 = \sum_{i,j} |a_{ij}|$.

\begin{remark}[General community structure]
  The power of Theorem~\ref{thm: community detection general} does not depend 
  on the community structure, i.e. on the number and sizes of the communities.
  This seemingly surprising observation can be explained by the fact that small 
  communities, those with sizes $o(n)$, can get absorbed in the error term in \eqref{eq: Zhat error general}, 
  so they will not be recovered. 
\end{remark}

\begin{remark}[If the sizes of communities are not known]		\label{rem: generality of community structure}
  Our choice of the parameter $\lambda$ in \eqref{eq: lambda general} assumes that 
  we know the sizes of the communities. What if they are not known?
  From the proof of Theorem~\ref{thm: community detection general} it 
  will be clear what happens when $\lambda>0$ is chosen {\em arbitrarily}.
   Assume that we choose $\l$ so that 
  $\l \le \l_0 := \sum_k |\CC_k|^2$. Then instead of estimating the full cluster 
  graph (described in Remark~\ref{rem: cluster graph}), 
  the solution $\Zhat$ will only estimate a certain {\em subgraph} of the cluster graph, 
  which may miss at most $\l_0 - \l$ edges. 
  On the other hand, if we choose $\l$ so that 
  $\l \ge \l_0$, then the solution $\Zhat$ will estimate a certain {\em supergraph} 
  of the cluster graph, which may have at most $\l - \l_0$ extra edges. In either case, 
  such solution could be meaningful in practice. 
\end{remark}

\begin{remark}[Cluster graph]						\label{rem: cluster graph}
  It may be convenient to view the cluster matrix $\bar{Z}$ 
  as the adjacency matrix of the {\em cluster graph}, in which all vertices within each 
  community are connected and there are no connections across the communities. 
  This way, the semidefinite program \eqref{eq: SDP sum fixed} takes a sparse graph as an input,
  and it returns an estimate of the cluster graph as an output. The effect of the program is thus
  to ``densify'' the network inside the communities and ``sparsify'' it across the communities.
\end{remark}

\begin{remark}[Other semidefinite programs]
  There is nothing special about the semidefinite programs \eqref{eq: SDP lambda} 
  and \eqref{eq: SDP sum fixed}.
  For example, one can tighten the constraints and instead of 
  $\diag(Z) \preceq \one_n$ require that $\diag(Z) = \one_n$ in both programs. 
  Similarly, instead of placing in \eqref{eq: SDP sum fixed} the constraint on the sum of all entries of $Z$,
  one can place constraints on the sums of each row.
  In a similar fashion, one should be able to analyze other semidefinite relaxations, 
  both new and those proposed in the previous literature on community detection, see \cite{Amini-Levina}.
\end{remark}

For one more illustration for the method described here, we refer the reader to Section~7 of the extended version of this paper \cite{GVarxiv}.
There we consider a minor modification of the semidefinite program \eqref{eq: SDP lambda},
and we show that it succeeds in presence of multiple communities of equal sizes 
(the so-called {\em balanced planted partition model}). The sufficient condition for that is 
$(a-b)^2 \ge 50^2 \e^{-2} (a + b(K-1))$ where $K$ is the number of communities, $s$ the size of the communities and $p=a/s$, $q=b/s$.

\subsection{Related work}			\label{s: previous}
%............

Community detection in stochastic block models is a fundamental problem that has 
been extensively studied in theoretical computer science and statistics. 
A plethora or algorithmic approaches have been proposed, in particular
those based on combinatorial techniques \cite{Bui-etal, Dyer-Frieze}, 
spectral clustering \cite{Boppana, Alon-Kahale, Alon, McSherry, Newman, Rohe-etal, 
Chaudhuri-etal, Nadakuditi-Newman, Lei-Rinaldo, Qin-Rohe, Joseph-Yu}, 
likelihood maximization \cite{Snijders-Nowicki, Bickel-Chen, Amini-etal},
variational methods \cite{Airoldi-etal, Celisse-etal, Bickel-etal},
Markov chain Monte Carlo \cite{Snijders-Nowicki, Nowicki-Snijders},
belief propagation \cite{Decelle-etal},
and convex optimization including semidefinite programming
\cite{Jalali-etal, Ames-Vavasis, Oymak-Hassibi, Ailon-etal, Chen-etal, Chen-Jalali-etal, Chen-Xu, 
  Cai-Li, Amini-Levina, Bui-etal}.

\subsubsection{Relatively dense networks: average degrees are $\Omega(\log n)$}

Most known rigorous results on community detection are proved for relatively dense 
networks whose expected degrees go to infinity with $n$. 
If the degrees grow no slower than $\log n$, it may be possible 
to recover the community structure {\em perfectly}, without any misclassified vertices. 
A variety of community detection methods are known to succeed
in this regime, including those based on spectral clustering,
likelihood maximization and convex optimization mentioned above; 
see e.g. \cite{McSherry, Bui-etal} and the references therein.

The semidefinite programs \eqref{eq: SDP lambda} and \eqref{eq: SDP sum fixed} 
are similar to those proposed in the recent literature,
most notably in \cite{Chen-etal, Chen-Xu, Cai-Li, Amini-Levina, Bui-etal}. The semidefinite 
relaxations discussed in \cite{Chen-Xu, Cai-Li} can perfectly recover 
the community structure if $(a-b)^2 \ge C (a \log n+b)$ for a sufficiently large constant $C$;
see \cite{Amini-Levina} for a review of these results.

\subsubsection{Totally sparse networks: bounded average degrees}			\label{s: totally sparse}

The problem becomes more difficult for sparser networks, whose expected average
degrees grow to infinity arbitrarily slowly or even remain bounded in $n$. 
Although studying such networks is well motivated from the practical perspective
\cite{Leskovec-etal, Strogatz}, little has been known on the theoretical level.

If the degrees grow slower than $\log n$, it is impossible to correctly classify 
all vertices, since with high probability a positive fraction of the vertices will be isolated. 
Still, the fraction of isolated vertices tends to zero with $n$,
so we can hope to correctly classify a {\em majority} of the vertices in this regime.

The spectral method developed by J.~Kahn and E.~Szemeredi for random regular graphs \cite{FKS}
can be adapted for Erd\"os-R\'enyi random graphs \cite{Alon-Kahale, Feige-Ofek} and, more generally, for 
the stochastic block model $G(n, \frac{a}{n}, \frac{b}{n})$. 
If one {\em truncates the graph} by removing all vertices with too large degrees (say, larger than $10(a+b)$),
then the argument of \cite{FKS, Feige-Ofek} can be adapted to conclude that with some positive probability,
the truncated adjacency matrix concentrates near its expectation in the spectral norm. 
The communities can then be approximately recovered using the {\em spectral clustering}, 
which is based on the signs of the coefficients of the second eigenvector.
Working out the details, one finds that a sufficient condition for this method to succeed
is similar to \eqref{eq: ab}, that is 
\begin{equation}         \label{eq: optimal ab}
(a-b)^2 \ge C_\e (a+b) 
\end{equation}
where $C_\e$ depends only on the desired accuracy $\e$ or recovery.
However, for real networks it is usually  impractical to remove high degree vertices and the probabilistic estimate from \cite{Feige-Ofek} is not sharp. 

A.~Coja-Oghlan \cite{Coja-Oghlan} proposed a different, complicated adaptive spectral algorithm 
that can approximately recover communities under the condition $(a-b)^2 \ge C_\e (a+b) \log (a+b)$.
Recently, L. Massouli\'e \cite{Massoulie} and E.~Mossel, J.~Neeman and A.~Sly \cite{Mossel-Neeman-Sly} 
came up with combinatorial algorithms based on path counting,
which can approximately recover communities under the condition \eqref{eq: optimal ab}. 
These results are stated in the asymptotic regime for $n \to \infty$ and 
without explicit dependence of $C_\e$ on the desired accuracy $\e$.
Furthermore, E.~Mossel, J.~Neeman and A.~Sly developed an algorithm 
based on belief propagation \cite{Mossel-Neeman-Sly-belief}, 
which minimizes the fraction of misclassified vertices.

Condition \eqref{eq: optimal ab} has the optimal form. 
Indeed, it was shown in \cite{Mossel-Neeman-Sly-consistency} that the lower bound \eqref{eq: optimal ab} 
is required for any algorithm to be able to recover communities with at most $\e n$ misclassified vertices, 
where $C_\e \to \infty$ as $\e \to 0$.
A conjecture of A. Decelle, F. Krzakala, C. Moore and L. Zdeborova  proved 
recently by E.~Mossel, J.~Neeman and A.~Sly \cite{Mossel-Neeman-Sly lower, Mossel-Neeman-Sly} and 
Massouile \cite{Massoulie} states that one can find a partition {\em correlated} with the true community 
partition (i.e. with the fraction of misclassified vertices bounded away from $50\%$ as $n \to \infty$) 
if $(a-b)^2 \ge C(a+b)$ with some constant $C>2$. Moreover, this result achieves information-theoretic limit: 
no algorithm can succeed if $C \le 2$.

It remains an open question whether semidefinite programing can achieve similar information-theoretic
limits. Theorem~\ref{thm: community detection} does not achieve them; addressing this problem will 
require to tighten the absolute constant and the dependence on $\e$ in \eqref{eq: ab}.

\subsubsection{The new results in historical perspective}

A variety of simple semidefinite programs like \eqref{eq: SDP lambda} and \eqref{eq: Zbar membership} 
have been proposed in the network literature. Such programs have been analyzed only for dense networks
where the degrees grow as $\Omega(\log n)$ in which case perfect community detection is possible.
The present paper shows that the same semidefinite programs succeed for totally sparse networks as well,
producing a small number of misclassified vertices; moreover the sufficient condition \eqref{eq: optimal ab} 
is optimal up to an absolute constant. 

Furthermore, the method of the present paper generalizes smoothly 
for a broad classes of sparse networks. We saw in Section~\ref{s: block model general} 
that semidefinite programming succeeds for networks with {\em variable edge probabilities} $p_{ij}$;
community detection in such networks seems to be out of reach for known spectral methods.

We also saw how networks with multiple communities be handled with semidefinite 
approach. This has been studied in the statistical literature before; 
the semidefinite relaxations proposed in \cite{Chen-etal, Chen-Xu, Cai-Li, Amini-Levina}
were designed for multiple communities and outliers. However, previous theoretical results 
for multiple communities were only available for dense regime where the degrees grow as
 $\Omega(\log n)$, in which case perfect community detection is possible.

\subsubsection{Follow up work}

After this paper had been submitted, several new results appeared on community detection
in stochastic block models. We will mention here only results that apply for totally sparse networks. 
The initial discovery of \cite{Massoulie, Mossel-Neeman-Sly} mentioned in
Section~\ref{s: totally sparse} was followed by the work \cite{Bordenave-Lelarge-Massoulie}.
Semidefinite programs on random graphs were further analyzed in \cite{Montanari-Sen}
using higher-rank Grothendieck inequalities and insights from mathematical physics. 
Stochastic block models with labeled edges were addressed in \cite{LMX}
using truncated spectral clustering (with high degree vertices removed, based on \cite{Feige-Ofek}) 
and semidefinite programming (whose analysis is based on the method of the present paper). 
A two-stage algorithm based on truncated spectral clustering
and swapping vertices (like e.g. in \cite{Mossel-Neeman-Sly-consistency})
was analyzed in \cite{Chin-Rao-Vu}; the swapping stage 
leads to the sufficient condition \eqref{eq: optimal ab} with 
with an optimal dependence on the accuracy, $C_\e \sim \log(1/\e)$.
A different combinatorial method was proposed and analyzed in \cite{Abbe-Sandon};
regularized spectral clustering was shown to succeed in \cite{LLV, LV};
and a computationally feasible likelihood-based algorithm that minimizes 
the risk for misclassification proportion was found in \cite{Gao-Ma-Zhang-Zhou}.
Some of the mentioned work can be used for networks with multiple communities, 
see \cite{Chin-Rao-Vu, Abbe-Sandon, LLV, LV, Gao-Ma-Zhang-Zhou}.

\subsection{Plan of the paper}
%.................

We discuss the method in general terms in Section~\ref{s: method}. 
We explain how Grothendieck's inequality can be used to show 
tightness of various semidefinite programs on random graphs. 
Section~\ref{s: Grothendieck} is devoted to Grothendieck's inequality and its implications 
for semidefinite programming. 
In Section~\ref{s: deviation} we prove a simple concentration inequality
for random matrices in the cut norm. 
In Section~\ref{s: community detection} we specialize to the community detection problem 
for the classical stochastic block model, and we prove Theorem~\ref{thm: community detection}
and Corollary~\ref{cor: vector recovery} there. 
In Section~\ref{s: community detection general} we consider the general classical stochastic block model, 
and we prove Theorem~\ref{thm: community detection general} there. 

\subsection*{Acknowledgement}
This work was carried out while the first author was a Gerhing Visiting Professor 
at the University of Michigan. He thanks this institution for hospitality. 
The second author is grateful to Alexander Barvinok for drawing his attention to 
Y.~Nesterov's work \cite{Nesterov} on combinatorial optimization and to Grothendieck's inequality
in this context. We also thank Elchanan Mossel for useful discussions, 
and the anonymous referees whose suggestions helped to improve the presentation.

\section{Semidefinite optimization on random graphs: the method in a nutshell}   \label{s: method}
%-----------------

In this section we explain the general method of this paper, which can be applied 
to a variety of optimization problems. 
To be specific, let us return to the problem we described in Section \ref{s: semidefinite}, 
which is to estimate the solution $\xbar$ of the optimization problem \eqref{eq: opt x Abar x}
from a single observation of the random matrix $A$.
We suggested there to approximate $\bar{x}$ by the solution of the (random) 
program \eqref{eq: opt x A x}, which we can rewrite as follows:
\begin{equation}							\label{eq: opt A x xtran}
\text{maximize } \ip{A}{x x^\tran} 
\quad \text{subject to} \quad
x \in \{-1,1\}^n.
\end{equation}
Note that if we maximized $\ip{A}{x x^\tran}$ over the Euclidean ball $B(0,\sqrt{n})$, then the problem 
would be simple -- the solution $x$ would be the eigenvector corresponding to the eigenvalue of $A$ 
of largest magnitude. This simpler problem underlies the most basic algorithm
for community detection called {\em spectral clustering}, where the communities 
are recovered based on the signs of an eigenvector of the adjacency matrix 
(going back to \cite{Hagen-Kahng, Boppana, McSherry}, see \cite{Rohe-etal}). 
The optimization problem \eqref{eq: opt A x xtran} is harder and more subtle; the replacement 
of the Euclidean ball by the cube introduces  a strong restriction on the coordinates of $x$. 
This restruction rules out {\em localized} solutions $x$ where most of the mass of $x$ is concentrated 
on a small fraction of coordinates. 
Since eigenvectors of sparse matrices tend to be localized (see \cite{Bordenave-Guionnet}), 
basic spectral clustering is often unsuccessful for sparse networks.

\medskip

Let us choose a convex subset $\Mopt$ of the set of positive semidefinite matrices whose all entries are 
bounded by $1$ in absolute value. (For now, it can be any subset.) 
Note that $x x^\tran$ appearing in \eqref{eq: opt A x xtran} are 
examples of such matrices. 
We consider the following semidefinite relaxation of \eqref{eq: opt A x xtran}:
\begin{equation}							\label{eq: opt AZ}
\text{maximize } \ip{A}{Z} 
\quad \text{subject to} \quad
Z \in \Mopt.
\end{equation}
We might hope that the solution $\Zhat$ of this program would enable us to estimate 
the solution $\xbar$ of \eqref{eq: opt x Abar x}.

To realize this hope, one needs to check a few things, which may or may not be true
depending on the application. 
First, one needs to design the feasible set $\Mopt$ in such a way 
that {\em the semidefinite relaxation of the expected problem \eqref{eq: opt x Abar x} is tight}. This means that
the solution $\bar{Z}$ of the program 
\begin{equation}							\label{eq: opt AZbar}
\text{maximize } \ip{\bar{A}}{Z} 
\quad \text{subject to} \quad
Z \in \Mopt
\end{equation}
satisfies 
\begin{equation}							\label{eq: tightness of relaxation}
\bar{Z} = \bar{x} \bar{x}^\tran.
\end{equation}
This condition can be arranged for in various applications. In particular, 
this is the case in the setting of Theorem~\ref{thm: community detection}; we show this in Lemma~\ref{lem: Zbar}.

\medskip

Second, one needs a {\em uniform deviation inequality}, which would guarantee with high probability that
\begin{equation}							\label{eq: deviation}
\max_{x,y \in \{-1,1\}^n} |\ip{A-\bar{A}}{xy^\tran}| \le \e.
\end{equation}
This can often be proved by applying standard deviation inequalities for a fixed pair $(x,y)$, 
followed by a union bound over all such pairs. We prove such a deviation inequality 
in Section~\ref{s: deviation}.

\medskip

Now we make the crucial step, which is an application of {\em Grothendieck's inequality}. 
A reformulation of this remarkable inequality, which we explain in Section~\ref{s: Grothendieck}, states that 
\eqref{eq: deviation} automatically implies that 
\begin{equation}							\label{eq: deviation on Mopt}
\max_{Z \in \Mopt} |\ip{A-\bar{A}}{Z}| \le C\e.
\end{equation}
This will allow us to conclude that the solution $\Zhat$ of \eqref{eq: opt AZ} approximates 
the solution $\bar{Z}$ of \eqref{eq: opt AZbar}. To see this, let us compare the value of the expected 
objective function $\ip{\bar{A}}{Z}$ at these two vectors. We have
\begin{align}
\ip{\bar{A}}{\Zhat} 
&\ge \ip{A}{\Zhat} - C\e			\quad \text{(replacing $\bar{A}$ by $A$ using \eqref{eq: deviation on Mopt})} \nonumber\\
&\ge \ip{A}{\bar{Z}} - C\e			\quad \text{(since $\Zhat$ is the maximizer in \eqref{eq: opt AZ})} \nonumber\\
&\ge \ip{\bar{A}}{\bar{Z}} - 2C\e			
  \quad \text{(replacing $A$ by $\bar{A}$ back using \eqref{eq: deviation on Mopt}).}   \label{eq: almost maximizer}
\end{align}
This means that $\Zhat$ almost maximizes the objective function $\ip{\bar{A}}{Z}$ in \eqref{eq: opt AZbar}.

\medskip

The final piece of information we require is that the expected objective function $\ip{\bar{A}}{Z}$
{\em distinguishes points near its maximizer} $\bar{Z}$. This would allow one to 
automatically conclude from \eqref{eq: almost maximizer} 
that the almost maximizer $\Zhat$ is close to the true maximizer, i.e. that 
\begin{equation}							\label{eq: Zhat-Zbar small}
\|\Zhat - \bar{Z}\| \le \text{something small}
\end{equation}
where $\|\cdot\|$ can be the Frobenius or operator norm. 
Intuitively, the requirement that the objective function distinguishes points 
amounts to a non-trivial curvature of the feasible set $\Mopt$ at the maximizer $\bar{Z}$.
In many situations, this property is easy to verify. In the setting of 
Theorems~\ref{thm: community detection} and \ref{thm: community detection general}, 
we check it in Lemma~\ref{lem: Zhat-Zbar} and Lemmas~\ref{lem: distinguishes points}--\ref{lem: Zhat-Zbar sum fixed}
respectively.

\medskip

Finally, we can recall from \eqref{eq: tightness of relaxation} that $\bar{Z} = \bar{x} \bar{x}^\tran$. 
Together with \eqref{eq: Zhat-Zbar small}, this yields that $\Zhat$ is approximately a rank-one matrix, 
and its leading eigenvector $\xhat$ satisfies 
$$
\|\xhat - \bar{x}\|_2 \le  \text{something small}.
$$
Thus we estimated the solution $\bar{x}$ of the problem \eqref{eq: opt x Abar x}
as desired.

\begin{remark}[General semidefinite programs]
  For this method to work, it is not crucial that the semidefinite program be a relaxation of 
  any vector optimization problem. Indeed, one can analyze semidefinite programs 
  of the type \eqref{eq: opt AZ} without any vector optimization problem \eqref{eq: opt A x xtran}
  in the background. In such cases, the requirement \eqref{eq: tightness of relaxation}
  of tightness of relaxation can be dropped. The solution $\bar{Z}$ may itself be informative. 
  An example of such situation is Theorem~\ref{thm: community detection general}
  where the community membership matrix $\bar{Z}$ is important by itself. However, $\bar{Z}$
  can not be represented as $\bar{x} \bar{x}^\tran$ for any $\bar{x}$, since $\bar{Z}$ is not a rank one matrix.  
\end{remark}

\section{Grothendieck's inequality and semidefinite programming}					\label{s: Grothendieck}
%---------------

Grothendieck's inequality is a remarkable result proved originally in the functional 
analytic context \cite{Grothendieck} and reformulated in \cite{Lindenstrauss-Pelczynski} 
in the form we are going to describe below. This inequality had found
applications in several areas \cite{Pisier, Khot-Naor}.
It has already been used to analyze semidefinite relaxations
of hard combinatorial optimization problems \cite{Nesterov, Alon-Naor}, although previous 
relaxations lead to constant (rather than arbitrary) accuracy.

\begin{theorem}[Grothendieck's inequality]				\label{eq: grothendieck}
  Consider an $n \times n$ matrix of real numbers $B = (b_{ij})$. 
  Assume that 
  $$
  \Big| \sum_{i,j} b_{ij} s_i t_j \Big| \le 1
  $$
  for all numbers $s_i, t_i \in \{-1,1\}$.
  Then 
  $$
  \Big| \sum_{i,j} b_{ij} \ip{X_i}{Y_j} \Big| \le K_\Gr
  $$
 for all vectors $X_i,Y_i \in B_2^n$.  
\end{theorem}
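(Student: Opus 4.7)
The plan is to prove Grothendieck's inequality by the classical route of Gaussian rounding combined with Krivine's preconditioning trick. First I would reduce to the case where each $X_i, Y_j$ is a unit vector in a Hilbert space: the bilinear objective $\sum_{i,j} b_{ij} \ip{X_i}{Y_j}$ attains its extremum on the product of spheres, so replacing $B_2^n$ by $S^{n-1}$ is free. I may as well work in a higher-dimensional Hilbert space $H$, since the Krivine step will enlarge the ambient space anyway.

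The fundamental tool is \emph{Grothendieck's identity}: for unit vectors $u, v \in H$ and a standard Gaussian vector $g$ in $H$,
$$\E\bigl[\sign\ip{g}{u}\, \sign\ip{g}{v}\bigr] \;=\; \tfrac{2}{\pi}\arcsin\ip{u}{v}.$$
This is proved by reducing to the two-dimensional span of $u,v$ and integrating in polar coordinates. Setting $s_i = \sign\ip{g}{X_i}$, $t_j = \sign\ip{g}{Y_j}$ produces $\{\pm 1\}$-valued variables, and taking expectations inside $\sum b_{ij} s_i t_j$ yields a bound on $\sum b_{ij} \arcsin\ip{X_i}{Y_j}$ — close to what we want, but corrupted by the $\arcsin$ nonlinearity.

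To straighten this out I would apply Krivine's construction: produce new unit vectors $X_i', Y_j'$ in a larger Hilbert space $H'$ whose inner products satisfy
$$\ip{X_i'}{Y_j'} \;=\; \sin\!\Bigl(c\, \ip{X_i}{Y_j}\Bigr)$$
for a fixed constant $c>0$. Such vectors exist because the function $z\mapsto \sin(cz)$ has a Taylor expansion with alternating signs; splitting the odd series into its positive and negative parts and realizing each part using symmetric tensor powers of $X_i$ (respectively, of $Y_j$ with signs absorbed through an auxiliary orthogonal direction) yields the desired inner-product profile — this is essentially Schoenberg's theorem on functions acting positively on Hilbert inner products. The convergence constraint boils down to $\sinh(c)\le 1$, so the largest admissible $c$ is $\sinh^{-1}(1)=\ln(1+\sqrt{2})$.

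Once $X_i', Y_j'$ are in hand, I would apply Grothendieck's identity to them: letting $s_i' = \sign\ip{g'}{X_i'}$, $t_j'=\sign\ip{g'}{Y_j'}$ for a Gaussian vector $g'$ in $H'$,
$$\tfrac{2}{\pi}\arcsin\ip{X_i'}{Y_j'} \;=\; \tfrac{2c}{\pi}\,\ip{X_i}{Y_j} \;=\; \E[s_i' t_j'].$$
Multiplying by $b_{ij}$, summing, and applying the hypothesis on the cube,
$$\tfrac{2c}{\pi}\,\Bigl|\sum_{i,j} b_{ij}\ip{X_i}{Y_j}\Bigr| \;=\; \Bigl|\E \sum_{i,j} b_{ij}\, s_i' t_j'\Bigr| \;\le\; \sup_{s,t\in\{\pm 1\}^n}\Bigl|\sum_{i,j} b_{ij}\, s_i t_j\Bigr| \;\le\; 1,$$
which gives the inequality with $K_\Gr \le \pi/(2\ln(1+\sqrt{2}))$. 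The main obstacle is the Krivine preconditioning step — constructing $X_i', Y_j'$ and verifying that the sine series is realizable as a Hilbert inner product up to the critical value of $c$. Grothendieck's identity itself is a one-page polar-coordinate computation, and the final averaging is immediate once the primed vectors exist.
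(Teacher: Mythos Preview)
Your proof sketch is correct: it is the classical Krivine argument, and it delivers exactly the bound $K_\Gr \le \pi/(2\ln(1+\sqrt{2}))$ that the paper quotes. One small clarification worth making explicit is that the normalization condition in the Krivine step is $\sinh(c)=1$ (not merely $\le 1$): with $a_k = \sqrt{c^{2k+1}/(2k+1)!}$ and $b_k = (-1)^k a_k$ one has $\|X_i'\|^2 = \|Y_j'\|^2 = \sum_{k\ge 0} c^{2k+1}/(2k+1)! = \sinh(c)$, so the primed vectors are unit vectors precisely at the critical value $c=\ln(1+\sqrt{2})$.

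That said, the paper does not prove Theorem~\ref{eq: grothendieck} at all. Grothendieck's inequality is invoked as a black box, with references to Grothendieck's original paper and the Lindenstrauss--Pe{\l}czy\'nski reformulation, and the constant bound is simply cited from the literature. The paper's contribution begins only at the point where the inequality is reformulated in matrix language (Fact~\ref{thm: grothendieck PSD}) and applied to the random deviation $A-\bar A$. So there is nothing to compare your argument against; you have supplied a full proof where the paper deliberately imports the result.
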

Here $B_2^n = \{ x \in \R^n : \|x\|_2 \le 1\}$ is the unit ball for the Euclidean norm, 
and $K_\Gr$ is an absolute constant referred to as {\em Grothendieck's constant}.
The best value of $K_\Gr$ is still unknown, and the best known bound \cite{Naor-Twins} is
\begin{equation}         \label{eq: Grothendieck constant}
K_\Gr < \frac{\pi}{2 \ln(1+\sqrt{2})} \le 1.783.
\end{equation}

\subsection{Grothendieck's inequality in matrix form}
%...............

To restate Grothendieck's inequality in a matrix form, 
let us assume for simplicity that $m=n$ and observe that 
$\sum_{i,j} b_{ij} s_i t_j = \ip{B}{s t^\tran}$
where $s$ and $t$ are the vectors in $\R^n$ with coordinates $s_i$ and $t_j$ respectively.
Similarly, 
$\sum_{i,j} b_{ij} \ip{X_i}{Y_j} = \ip{B}{X Y^\tran}$
where $X$ and $Y$ are the $n \times n$ matrices with rows $X_i^\tran$ and $Y_j^\tran$ respectively.
This motivates us to consider the following two sets of matrices:
$$
\MM_1 := \left\{ s t^\tran :\; s, t \in \{-1,1\}^n \right\}, 
\quad
\Mgr := \left\{ XY^\tran :\; \text{all rows } X_i, Y_j \in B_2^n \right\}.
$$
Clearly, $\MM_1 \subset \Mgr$.
Grothendieck's inequality can be stated as follows:
  \begin{equation}							\label{eq: grothendieck matrix}
\forall B \in \R^{n \times n}, \quad  \max_{Z \in \Mgr} \left| \ip{B}{Z} \right| 
  \le K_\Gr \max_{Z \in \MM_1} \left| \ip{B}{Z} \right|.
  \end{equation}

We can view this inequality as a relation between two matrix norms. 
The right side of \eqref{eq: grothendieck matrix} defines
the $\ell_\infty \to \ell_1$ norm of $B = (b_{ij})$, which is
\begin{align}
\|B\|_{\infty \to 1} 
&= \max_{\|s\|_\infty \le 1} \|Bs\|_1
= \max_{s,t \in \{-1,1\}^n} \ip{B}{s t^\tran}
= \max_{s,t \in \{-1,1\}^n} \sum_{i,j=1}^n b_{ij} s_i t_j \nonumber\\
&= \max_{Z \in \MM_1} \left| \ip{B}{Z} \right|.			\label{eq: infinity to one}
\end{align}
We note in passing that this norm is equivalent to the so-called {\em cut norm}, 
whose importance in algorithmic problems is well understood in theoretical computer science community, 
see e.g. \cite{Alon-Naor, Khot-Naor}.

\medskip

Let us restrict our attention to the part of Grothendieck's set $\Mgr$ consisting of positive 
semidefinite matrices. To do so, we consider the following set of $n \times n$ matrices:
\begin{equation}							\label{eq: Mgr+}
\Mgr^+ := \left\{ Z:\; Z \succeq 0, \; \diag(Z) \preceq \one_n \right\}  \subset \Mgr \subset [-1,1]^{n \times n}.
\end{equation}
%Here as usual $Z \succeq 0$ means that $Z$ is positive semidefinite, 
%and the constraint $\diag(Z) \preceq \one_n$ means that the diagonal entries of $Z$
%are bounded by $1$.
To check the first inclusion in \eqref{eq: Mgr+}, let $Z \in \Mgr^+$. 
Since $Z \succeq 0$, there exists a matrix $X$ such that $Z = X^2$. 
The rows $X_i^\tran$ of $X$ satisfy 
$
\|X_i\|_2^2 = \ip{X_i}{X_i} = (X^\tran X)_{ii} = Z_{ii} \le 1,
$
where the last inequality follows from the assumption $\diag(Z) \preceq \one_n$.
Choosing $Y=X$ in the definition of $\Mgr$, we conclude that $Z \in \Mgr$. 
To check the second inclusion in \eqref{eq: Mgr+}, note that for every matrix $X Y^\tran \in \Mgr$, 
we have $(XY^\tran)_{ij} = \ip{X_i}{Y_j} \le \|X_i\|_2 \; \|Y_j\|_2 \le 1.$

Combining \eqref{eq: grothendieck matrix} with \eqref{eq: Mgr+}
and the identity \eqref{eq: infinity to one},
we obtain the following form of Grothendieck inequality for positive semidefinite matrices.

\begin{fact}[Grothendieck's inequality, PSD]				\label{thm: grothendieck PSD}
  Every matrix $B \in \R^{n \times n}$ satisfies 
  $$
  \max_{Z \in \Mgr^+} \left| \ip{B}{Z} \right| 
  \le K_\Gr \, \|B\|_{\infty \to 1}.
  $$
\end{fact}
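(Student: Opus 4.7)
The plan is essentially to chain together the ingredients that the preceding discussion has already assembled. The statement asserts that, after restricting attention to the PSD part $\Mgr^+$ of Grothendieck's set, one still enjoys the bound $K_\Gr \|B\|_{\infty\to 1}$. Since $\Mgr^+ \subset \Mgr$ by the first inclusion in \eqref{eq: Mgr+}, the maximum on the left-hand side is at most $\max_{Z \in \Mgr} |\ip{B}{Z}|$, so it suffices to control the full Grothendieck maximum.

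First, I would invoke the matrix form of Grothendieck's inequality, namely \eqref{eq: grothendieck matrix}, which gives
\begin{equation*}
\max_{Z \in \Mgr} |\ip{B}{Z}| \le K_\Gr \max_{Z \in \MM_1} |\ip{B}{Z}|.
\end{equation*}
Then I would recognize the right-hand side as $K_\Gr \|B\|_{\infty \to 1}$ by the identity \eqref{eq: infinity to one}. Combining the three inequalities
\begin{equation*}
\max_{Z \in \Mgr^+} |\ip{B}{Z}| \le \max_{Z \in \Mgr} |\ip{B}{Z}| \le K_\Gr \max_{Z \in \MM_1} |\ip{B}{Z}| = K_\Gr \|B\|_{\infty \to 1}
\end{equation*}
yields the conclusion.

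There is no real obstacle here; the substantive content has already been extracted in the paragraph preceding the statement, where the inclusion $\Mgr^+ \subset \Mgr$ is verified by factoring $Z = X^2$ (available since $Z \succeq 0$) and noting that the rows of $X$ have squared norms $Z_{ii} \le 1$ by the constraint $\diag(Z) \preceq \one_n$, so that the choice $Y = X$ witnesses membership in $\Mgr$. If anything required care, it would be making sure this factorization step is invoked cleanly, but this is already done in the excerpt. So the proof I would write is a two-line deduction, citing \eqref{eq: grothendieck matrix}, \eqref{eq: infinity to one}, and the inclusion \eqref{eq: Mgr+}.
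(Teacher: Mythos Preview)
Your proposal is correct and follows exactly the paper's own derivation: the paper states immediately before the Fact that it is obtained by ``combining \eqref{eq: grothendieck matrix} with \eqref{eq: Mgr+} and the identity \eqref{eq: infinity to one},'' which is precisely the chain of inequalities you wrote out.
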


\subsection{Semidefinite programming}				\label{s: SDP on Grothendieck}
%..................

To keep the discussion sufficiently general, let us consider the following 
class of optimization programs:
\begin{equation}							\label{eq: SDP general}
  \text{maximize } \ip{B}{Z}
  \quad \text{subject to} \quad Z \in \Mopt.
\end{equation}
Here $\Mopt$ can be any subset of the Grothendieck's set $\Mgr^+$ defined in \eqref{eq: Mgr+}.
A good example is where $B$ is the adjacency matrix of a random graph, 
possibly dilated by a constant matrix.
For example, the semidefinite program \eqref{eq: SDP lambda} is of the form 
\eqref{eq: SDP general} with $\Mopt = \MM_\Gr^+$
and $B = A - \l E_n$.

Imagine that there is a similar but simpler problem where $B$ is replaced by a certain {\em reference matrix} $R$, 
that is 
\begin{equation}							\label{eq: SDP reference}
  \text{maximize } \ip{R}{Z}
  \quad \text{subject to} \quad Z \in \Mopt.
\end{equation}
A good example is where $B$ is a random matrix and $R = \E B$; this will be the case in
the proof of Theorem~\ref{thm: community detection general}.
Let $\Zhat$ and $Z_R$ be the solutions of the original problem \eqref{eq: SDP general}
and the reference problem \eqref{eq: SDP reference} respectively, thus 
$$
\Zhat := \arg \max_{Z \in \Mopt} \ip{B}{Z}, \quad 
Z_R := \arg \max_{Z \in \Mopt} \ip{R}{Z}.
$$
The next lemma shows that $\Zhat$ provides an almost optimal solution to 
the reference problem if the original and reference matrices $B$ and $R$ are close.

\begin{lemma}[$\Zhat$ almost maximizes the reference objective function]		\label{lem: almost max}
We have
  \begin{equation}							\label{eq: almost max}
  \ip{R}{Z_R} - 2 K_\Gr \|B-R\|_{\infty \to 1}
  \le \ip{R}{\Zhat} 
  \le \ip{R}{Z_R}.
  \end{equation}
\end{lemma}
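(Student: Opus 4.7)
The plan is to prove the two inequalities separately, with the right one being immediate and the left one following from a standard ``two sided optimality swap'' combined with Fact~\ref{thm: grothendieck PSD}.

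The upper bound $\ip{R}{\Zhat} \le \ip{R}{Z_R}$ is trivial: by definition $Z_R$ maximizes $\ip{R}{Z}$ over $\Mopt$, and $\Zhat \in \Mopt$.

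For the lower bound, I would set $D := B - R$ and rewrite
\[
\ip{R}{Z_R} - \ip{R}{\Zhat}
= \bigl(\ip{B}{Z_R} - \ip{B}{\Zhat}\bigr) + \ip{D}{\Zhat - Z_R}.
\]
The first parenthesis is nonpositive, because $\Zhat$ is the maximizer of $\ip{B}{\cdot}$ over $\Mopt$ and $Z_R \in \Mopt$. So
\[
\ip{R}{Z_R} - \ip{R}{\Zhat}
\;\le\; \ip{B-R}{\Zhat} - \ip{B-R}{Z_R}
\;\le\; 2 \max_{Z \in \Mopt} \bigl|\ip{B-R}{Z}\bigr|.
\]

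The final step is to bound this maximum via Grothendieck's inequality. Since $\Mopt \subseteq \Mgr^+$ by assumption, we have
\[
\max_{Z \in \Mopt} \bigl|\ip{B-R}{Z}\bigr|
\;\le\; \max_{Z \in \Mgr^+} \bigl|\ip{B-R}{Z}\bigr|
\;\le\; K_\Gr \, \|B-R\|_{\infty \to 1},
\]
where the last inequality is exactly Fact~\ref{thm: grothendieck PSD}. Substituting back yields $\ip{R}{Z_R} - \ip{R}{\Zhat} \le 2 K_\Gr \|B-R\|_{\infty \to 1}$, which rearranges to the claimed lower bound.

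There is no real obstacle here; the lemma is essentially the formalization of the heuristic chain \eqref{eq: almost maximizer} carried out for an arbitrary reference matrix $R$ in place of $\bar A$. The only point worth pausing on is that we cannot control $\max_{Z \in \Mopt}|\ip{B-R}{Z}|$ directly by $\|B-R\|_{\infty \to 1}$ (which is a supremum only over the Boolean cube $\MM_1$), and this is precisely where Grothendieck's inequality does the work, paying the constant factor $K_\Gr$.
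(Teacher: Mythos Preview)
Your proof is correct and follows essentially the same approach as the paper: the upper bound is trivial, and the lower bound is obtained by swapping $R$ for $B$ twice (paying $K_\Gr \|B-R\|_{\infty \to 1}$ each time via Fact~\ref{thm: grothendieck PSD}) and using the optimality of $\Zhat$ for $\ip{B}{\cdot}$ in between. The only cosmetic difference is that you organize the argument as a single algebraic decomposition, while the paper writes it as the three-step chain $\ip{R}{\Zhat} \ge \ip{B}{\Zhat} - \e \ge \ip{B}{Z_R} - \e \ge \ip{R}{Z_R} - 2\e$.
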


\begin{proof}
The upper bound is trivial by definition of $Z_R$.
The lower bound is based on Fact~\ref{thm: grothendieck PSD}, 
which implies that for every $Z \in \Mopt$, one has
\begin{equation}							\label{eq: B-Bbar}
|\ip{B-R}{Z}| \le  K_\Gr \|B-R\|_{\infty \to 1} =: \e.
\end{equation}
Now, to prove the lower bound in \eqref{eq: almost max}, we 
will first replace $R$ by $B$ using \eqref{eq: B-Bbar},
then replace $\Zhat$ by $Z_R$ using the fact that $\Zhat$ is a maximizer for $\ip{B}{Z}$, 
and finally replace back $B$ by $R$ using \eqref{eq: B-Bbar} again.
This way we obtain 
$$
\ip{R}{\Zhat}
\ge \ip{B}{\Zhat} - \e 
\ge \ip{B}{Z_R} - \e 
\ge \ip{R}{Z_R} - 2\e.
$$
This completes the proof of Lemma~\ref{lem: almost max}.
\end{proof}

\section{Deviation in the cut norm}					\label{s: deviation}
%----------------

To be able to effectively use Lemma~\ref{lem: almost max}, 
we will now show how to bound the cut norm of random matrices. 

\begin{lemma}[Deviation in $\ell_\infty \to \ell_1$ norm]				\label{lem: deviation}
  Let $A = (a_{ij}) \in \R^{n \times n}$ be a symmetric matrix whose diagonal entries equal  1, whose entries
 above the diagonal are independent random variables satisfying $0 \le a_{ij}  \le 1$.
  Assume that 
  \begin{equation}							\label{eq: pbar}
  \pbar := \frac{2}{n(n-1)} \sum_{i<j} \Var(a_{ij})\ge \frac{9}{n}.
  \end{equation}
  Then, with probability at least $1-  e^3 5^{-n}$, we have 
  $$
  \|A - \E A\|_{\infty \to 1} \le 3 \, \pbar^{1/2} n^{3/2}.
  $$
\end{lemma}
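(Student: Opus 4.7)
My plan is to reduce the cut-norm bound to a scalar concentration bound for each fixed sign pattern, apply Bernstein's inequality, and then union-bound over all $4^n$ patterns. By the identity \eqref{eq: infinity to one},
$$
\|A - \E A\|_{\infty \to 1} = \max_{s,t \in \{-1,1\}^n} |X_{s,t}|,
\qquad X_{s,t} := \sum_{i,j} (a_{ij} - \E a_{ij}) s_i t_j.
$$
Because the diagonal of $A$ is deterministic ($a_{ii} = 1$) and $A$ is symmetric with independent entries above the diagonal, I would rewrite
$$
X_{s,t} = \sum_{i<j} (a_{ij} - \E a_{ij})(s_i t_j + s_j t_i),
$$
which is a sum of $\binom{n}{2}$ independent, mean-zero random variables. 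Each summand is bounded in absolute value by $M = 2$ (since $|a_{ij} - \E a_{ij}| \le 1$ and $|s_i t_j + s_j t_i| \le 2$), and the total variance satisfies
$$
\sigma^2 = \sum_{i<j} \Var(a_{ij})\,(s_i t_j + s_j t_i)^2 \;\le\; 4 \sum_{i<j} \Var(a_{ij}) \;=\; 2 n(n-1) \pbar \;\le\; 2 n^2 \pbar.
$$

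Next, I would apply Bernstein's inequality. With $t_0 = 3\, \pbar^{1/2} n^{3/2}$, the hypothesis $\pbar n \ge 9$ gives $\pbar^{1/2} n^{1/2} \ge 3$, hence the sub-exponential correction $M t_0 / 3$ is dominated by the variance term, and one stays in the sub-Gaussian regime of Bernstein. Plugging in yields a per-pair bound of the form $\Pr{|X_{s,t}| \ge t_0} \le 2 \exp(-c\, n)$ for a constant $c$ computed from the ratio $t_0^2 / (\sigma^2 + M t_0/3)$, where the hypothesis $\pbar n \ge 9$ is used to close the estimate.

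The third step is the union bound over all $4^n$ pairs $(s,t)$, giving
$$
\Pr{\|A - \E A\|_{\infty \to 1} \ge 3\, \pbar^{1/2} n^{3/2}} \;\le\; 2 \cdot 4^n \exp(-c\, n).
$$
For this to fit inside $e^3 5^{-n}$, the exponent $c$ must exceed $\ln 20 \approx 3$. This is the main obstacle: the standard textbook form of Bernstein gives a constant that is close to but slightly smaller than what is needed, so the proof must either use a sharp form of Bernstein tailored to the $[0,1]$-valued case, or exploit the structure of quadratic forms $s^\tran B s$ (for instance by decoupling $s$ and $t$ through the identity $4\, s^\tran B t = (s+t)^\tran B(s+t) - (s-t)^\tran B (s-t)$, which replaces the effective cardinality of sign patterns by $3^n$), together with the hypothesis $\pbar n \ge 9$.

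Finally, I would collect the constants, absorb the leading factor $2$ from the two-sided Bernstein tail and any small discrepancy in the exponent into the absolute constant $e^3$, and conclude the stated estimate. The whole proof is a pure concentration argument; Grothendieck's inequality is not used here -- it enters the next section to lift this cut-norm bound to a bound on $\langle A - \E A, Z \rangle$ uniformly over all $Z \in \Mgr^+$.
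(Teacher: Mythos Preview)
Your approach is essentially the paper's: rewrite the bilinear form via symmetry as a sum over $i<j$ of independent bounded mean-zero terms, apply Bernstein's inequality for each fixed sign pair, and union-bound over the $4^n$ pairs. The paper does not use a sharpened Bernstein variant or your decoupling idea; it simply substitutes $t = 6(\pbar/n)^{1/2}$ into the standard Bernstein bound, uses the hypothesis $\pbar \ge 9/n$ to force $t < 2\pbar$ so that the linear term $Mt/3$ in the denominator is absorbed by the variance term, and asserts the resulting exponent is $3(n-1)$, whence $2^{2n} e^{-3(n-1)} \le e^3\,5^{-n}$ --- so your instinct that the arithmetic is tight is right, but no extra structural trick is introduced.
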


We will shortly deduce Lemma~\ref{lem: deviation} from Bernstein's inequality 
followed by a union bound over $x,y \in \{-1,1\}^n$; arguments of this type are 
standard in the analysis of random graphs (see e.g. \cite[Section~2.3]{Bollobas}).
But before we do this, let us 
pause to explain the conclusion of Lemma~\ref{lem: deviation}.

\begin{remark}[Regularization effect of $\ell_\infty \to \ell_1$ norm]
  Let us test Lemma~\ref{lem: deviation} on the simple example 
  where $A$ is the adjacency matrix of a sparse Erd\"os-Renyi random graph $G(n,p)$ 
  with $p=a/n$, $a \ge 1$. Here we have $\pbar = p(1-p) \le p = a/n$. Lemma~\ref{lem: deviation} states that 
  $\|A - \E A\|_{\infty \to 1} \le 3 a^{1/2} n$. This can be compared with 
  $\|\E A\|_{\infty \to 1} = (1 + p(n-1)) n \ge an$. So we obtain
  $$
  \|A - \E A\|_{\infty \to 1} \le 3 a^{-1/2} \, \|\E A\|_{\infty \to 1}.
  $$
  This deviation inequality is good when $a$ exceeds a sufficiently large
  absolute constant. Since that $a = pn$ is the expected average degree of the graph, 
  it follows that we can handle graphs with {\em bounded expected degrees}. 
  
  \medskip
  
  This is a good place to note the importance of the $\ell_\infty \to \ell_1$ norm.
  Indeed, for the {\em spectral norm} a similar concentration inequality would fail.   
  As is well known and easy to check, for $a =O(1)$ one would have $\|A - \E A\| \gg \|\E A\|$
  due to contributions from high degree vertices. In fact, those are the only obstructions to concentration. 
  Indeed, according to a result of U.~Feige and E.~Ofek \cite{Feige-Ofek}, the removal of high-degree vertices forces 
  a non-trivial concentration inequality to hold in the spectral norm. 
  In contrast to this, the $\ell_\infty \to \ell_1$ norm does not feel the vertices 
  with high degrees. It has an automatic {\em regularization effect}, which averages the 
  contributions of all vertices, and in particular the few high degree vertices. 
\end{remark}

The proof of Lemma~\ref{lem: deviation} will be based on Bernstein's inequality, which we quote here
(see, for example, Theorem 1.2.6 in \cite{lamabook}).

\begin{theorem}[Bernstein's inequality]				\label{thm: bernstein}
  Let $Y_1,\ldots,Y_N$ be independent random variables such that $\E Y_k = 0$ and $|Y_k| \le M$. 
  Denote $\s^2 = \frac{1}{N} \sum_{k=1}^N \Var(Y_k)$.
  Then for any $t \ge 0$, one has
  $$
  \Pr{ \frac{1}{N} \sum_{k=1}^N Y_k > t }  
  \le \exp \left( - \frac{N t^2/2}{\s^2 + Mt/3} \right).
  $$
\end{theorem}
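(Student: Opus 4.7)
The plan is to reduce the bound on $\|A - \E A\|_{\infty\to 1}$ to a union bound over the $4^n$ extreme pairs $(x, y) \in \{-1,1\}^n \times \{-1,1\}^n$, applying a Bernstein tail bound to each. By \eqref{eq: infinity to one},
\[
\|A - \E A\|_{\infty\to 1} = \max_{x, y \in \{-1, 1\}^n} \ip{A - \E A}{x y^\tran}.
\]
I would fix such a pair and exploit the structure of $A - \E A$: because $A$ is symmetric with deterministic diagonal equal to $1$, the diagonal of $A - \E A$ vanishes, while its strictly upper-triangular entries $\{a_{ij} - \E a_{ij}\}_{i<j}$ are independent and centered. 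Pairing the $(i,j)$ and $(j,i)$ contributions gives
\[
\ip{A - \E A}{x y^\tran} \;=\; \sum_{i<j} (a_{ij} - \E a_{ij})\,(x_i y_j + x_j y_i),
\]
a sum of $\binom{n}{2}$ independent mean-zero random variables $Y_{ij}$.

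Next I would set up the parameters for Bernstein's inequality. The bound $|a_{ij} - \E a_{ij}| \le 1$ together with $|x_i y_j + x_j y_i| \le 2$ gives $|Y_{ij}| \le 2$, while $(x_i y_j + x_j y_i)^2 \le 4$ yields the variance bound $\sum_{i<j} \Var(Y_{ij}) \le 4 \sum_{i<j} \Var(a_{ij}) = 2 n(n-1)\pbar$. Feeding these parameters into Theorem~\ref{thm: bernstein}, applied to both $\pm \sum_{i<j} Y_{ij}$ at deviation level $t = 3 \pbar^{1/2} n^{3/2}$, then produces a per-pair tail bound of the form $\exp(-c\,n)$ for an explicit constant $c$. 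The hypothesis $\pbar \ge 9/n$ enters here in an essential way: it is equivalent to $\pbar^{1/2} n^{3/2} \le \tfrac{1}{3}\pbar n^{2}$, which ensures that the linear correction term $Mt/3$ in Bernstein's denominator is absorbed into the variance term $2n(n-1)\pbar$, so the resulting exponent is genuinely linear in $n$ rather than degenerating to a smaller rate like $\sqrt{n}$.

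Finally, a union bound over the $4^n$ pairs $(x, y) \in \{-1,1\}^n \times \{-1,1\}^n$ converts the per-pair bound into the stated global inequality, with failure probability at most $e^3 5^{-n}$. The main delicate point is the bookkeeping of constants: the interplay between the hypothesis $\pbar n \ge 9$, the factor $3$ in $t = 3\pbar^{1/2}n^{3/2}$, and the base $5$ of the exponential rate is exactly what allows the Bernstein exponent to exceed $n\log 4$ by the margin needed to absorb the union bound and leave a clean $e^3 5^{-n}$ residual. Everything else is the standard Bernstein-plus-union-bound template familiar from deviation arguments on random graphs.
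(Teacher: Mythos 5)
Your proposal does not prove the statement in question. The statement to be proved is Bernstein's inequality itself (Theorem~\ref{thm: bernstein}), a general tail bound for sums of independent, centered, bounded random variables; in the paper it is simply quoted from the literature. What you have written is instead a proof of Lemma~\ref{lem: deviation} (the deviation bound in the $\ell_\infty \to \ell_1$ norm), and your argument explicitly \emph{invokes} Theorem~\ref{thm: bernstein} as an ingredient (``feeding these parameters into Theorem~\ref{thm: bernstein}''). As a proof of the stated theorem this is vacuous: you assume the very inequality you are asked to establish.

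A proof of Bernstein's inequality requires a different argument, namely the exponential moment (Chernoff) method. For $\lambda>0$ one bounds the moment generating function of each summand via the moment estimates $|\E Y_k^j| \le M^{j-2}\Var(Y_k)$ for $j\ge 2$, which give $\E e^{\lambda Y_k} \le \exp\bigl(\tfrac{\lambda^2 \Var(Y_k)/2}{1-\lambda M/3}\bigr)$ for $0<\lambda < 3/M$; independence then bounds $\E e^{\lambda \sum_k Y_k}$ by the product of these, and Markov's inequality followed by optimization over $\lambda$ yields exactly the exponent $-\frac{Nt^2/2}{\s^2+Mt/3}$. None of this appears in your proposal. (For what it is worth, as an argument for Lemma~\ref{lem: deviation} your sketch essentially coincides with the paper's: fix $x,y$, collect the off-diagonal terms into a sum of $\binom{n}{2}$ independent centered variables with $M\le 2$ and average variance at most $4\pbar$, apply Bernstein at the level $t=6(\pbar/n)^{1/2}$ in the normalized form, use $\pbar\ge 9/n$ to keep the exponent linear in $n$, and take a union bound over the $2^{2n}$ sign pairs. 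But that is a different statement from the one you were asked to prove.)
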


\begin{proof}[Proof of Lemma~\ref{lem: deviation}]
Recalling the definition \eqref{eq: infinity to one} of the 
$\ell_\infty \to \ell_1$ norm, we see that we need to bound 
\begin{equation}         \label{eq: A-EA norm}
\|A - \E A\|_{\infty \to 1} = \max_{x,y \in \{-1,1\}^n} \sum_{i,j=1}^n (a_{ij} - \E a_{ij}) x_i y_j.
\end{equation}
Let us fix $x,y \in \{-1,1\}^n$.
Using the symmetry of $A - \E A$, the fact that diagonal entries of $A - \E A$ vanish and collecting the identical terms, 
we can express the sum in \eqref{eq: A-EA norm} as a 
sum of independent random variables
$$
\sum_{i < j} X_{ij},
\quad \text{where} \quad
X_{ij} =   2(a_{ij} - \E a_{ij}) x_i y_j.
$$
To control the sum $\sum_{i < j} X_{ij}$ we can use Bernstein's inequality, Theorem~\ref{thm: bernstein}.
There are $N = \frac{n(n-1)}{2}$ terms in this sum. Since $|x_i| = |y_i| = 1$ for all $i$, 
the average variance $\s^2$ of all terms $X_{ij}$ is at most $2^2$ times the average variance of all 
$a_{ij}$, which is $\pbar$. In other words, $\s^2 \le 4 \pbar$. 
Furthermore, $|X_{ij}| \le 2 |a_{ij} - \E a_{ij}| \le 2$ since $0 \le a_{ij} \le 1$ by assumption. 
Hence $M \le 2$. It follows that 
\begin{equation}         \label{eq: prob prelim}
\Pr{ \frac{1}{N} \sum_{i < j} X_{ij} > t } 
\le \exp \left( - \frac{N t^2/2}{4 \pbar + 2t/3} \right).
\end{equation}
Let us substitute $t = 6 \, (\pbar/n)^{1/2}$ here. 
Rearranging the terms and using that $N = \frac{n(n-1)}{2}$ and $\pbar > 9/n$ (so that $t < 2 \pbar$), 
we conclude that the probability in \eqref{eq: prob prelim} is bounded by $\exp(-3(n -1))$. 

Summarizing, we have proved that for every ${x,y \in \{-1,1\}^n}$
$$
\Pr{ \frac{2}{n(n-1)} \sum_{i,j=1}^n (a_{ij} - \E a_{ij}) x_i y_j 
  > 6 \Big( \frac{\pbar}{n} \Big)^{1/2} }
\le e^{-3(n-1)}.
$$
Taking a union bound over all $2^{2n}$ pairs $(x,y)$, we conclude that 
\begin{align*}
\Pr{ \max_{x,y \in \{-1,1\}^n} \frac{2}{n(n-1)} \sum_{i,j=1}^n (a_{ij} - \E a_{ij}) x_i y_j 
  > 6 \Big( \frac{\pbar}{n} \Big)^{1/2} }
&\le 2^{2n} \cdot e^{-3(n-1)} \\
&\le e^3 \cdot 5^{-n}.
\end{align*}
Rearranging the terms and using the definition \eqref{eq: A-EA norm} of the $\ell_\infty \to \ell_1$ norm, 
we conclude the proof of Lemma~\ref{lem: deviation}.
\end{proof}

\begin{remark}[The sum of entries]				\label{rem: sum of entries}
  Note that by definition, the quantity $\big| \sum_{i,j=1}^n (a_{ij} - \E a_{ij}) \big|$
  is bounded by $\|A - \E A\|_{\infty \to 1}$, and thus it can be controlled by Lemma~\ref{lem: deviation}. 
  Alternatively, a bound on this quantity follows directly from the last line 
  of the proof of Lemma~\ref{lem: deviation}. 
  For a future reference, we express it in the following way:
  \[
  \frac{2}{n(n-1)} \Big| \sum_{i < j} (a_{ij} - \E a_{ij}) \Big | \le 3 \, \pbar^{1/2} n^{-1/2}.
  \] 
\end{remark}

\section{Stochastic block model: proof of Theorem~\ref{thm: community detection}}		\label{s: community detection}
%-------------------------

So far our discussion has been general, and the results could be applied to 
a variety of semidefinite programs on random graphs. 
In this section, we specialize to the community detection problem 
considered in Theorem~\ref{thm: community detection}.
Thus we are going to analyze the optimization problem \eqref{eq: SDP lambda},
where $A$ is the adjacency matrix of a random graph distributed according 
to the classical stochastic block model $G(n,p,q)$. 

As we already noticed, this is a particular case of the class of problems \eqref{eq: SDP general} 
that we analyzed in Section~\ref{s: SDP on Grothendieck}. In our case,
$$
B := A - \l E_n
$$ 
with $\lambda$ defined in \eqref{eq: lambda}, and the feasible set is
$$
\Mopt := \Mgr^+ = \left\{ Z :\; Z \succeq 0, \; \diag(Z) \preceq \one_n \right\}.
$$

\subsection{The maximizer of the reference objective function}
%............

In order to successfully apply Lemma~\ref{lem: almost max}, we will now choose 
a reference matrix $R$ so that it is close to (but also conveniently simpler than) the expectation of $B$.
To do so, we can assume without loss of generality that  
$\CC_1 = \{1,\ldots, n/2\}$ and $\CC_2 = \{ n/2+1,\ldots,n\}$. 
Then we define $R$ as a block matrix
\begin{equation}         \label{eq: R}
R = 
\frac{p-q}{2}
\begin{bmatrix}
  \phantom{-}E_{n/2} & -E_{n/2} \\
  -E_{n/2} & \phantom{-}E_{n/2}
\end{bmatrix}
\end{equation}
where as usual $E_{n/2}$ denotes the $n/2 \times n/2$ matrix whose all entries equal $1$.

Let us compute the expected value $\E B = \E A - (\E \l) E_n$ and compare it to $R$.
To do so, note that the expected value of $A$ has the form
\[
\E A = 
\begin{bmatrix}
  p E_{n/2} & q E_{n/2} \\
  q E_{n/2} & p E_{n/2}
\end{bmatrix} +(1-p) I_n.
\]
(The contribution of the identity matrix $I_n$ is required here since the diagonal entries of $A$ 
and thus of $\E A$ equal $1$ due to the self-loops.) 
Furthermore, the definition of $\l$ in \eqref{eq: lambda} easily implies that 
\begin{equation}	\label{eq: expected lambda}
\E \l = \frac{1}{n(n-1)} \sum_{i \ne j} \E a_{ij} = \frac{p+q}{2} \frac{n^2}{n(n-1)} - \frac{p}{n-1} = \frac{p+q}{2} - \frac{p-q}{n-1}.
\end{equation}
Thus
\begin{equation}         \label{eq: Abar Bbar}
\E B = \E A - (\E \l) E_n = R + (1-p) \one_n - \frac{p-q}{n-1} E_n.
\end{equation}
In the near future we will think of $R$ as the leading term and other two terms as being negligible,
so \eqref{eq: Abar Bbar} intuitively states that $R \approx \E B$. We save this 
fact for later. 

\medskip

Using the simple form of $R$, we can easily determine the form of the solution $Z_R$ 
of the reference problem \eqref{eq: SDP reference}.

\begin{lemma}[The maximizer of the reference objective function]				\label{lem: Zbar}
  We have
  $$
  Z_R
  := \arg \max_{Z \in \Mopt} \ip{R}{Z}
  =
  \begin{bmatrix}
    \phantom{-}E_{n/2} & -E_{n/2} \\
    -E_{n/2} & \phantom{-}E_{n/2}
  \end{bmatrix}.
  $$    
\end{lemma}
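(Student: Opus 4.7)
The plan is to recognize that the matrix $R$ in \eqref{eq: R} is rank one: writing $\bar x$ for the community membership vector from \eqref{eq: xbar}, which takes value $+1$ on $\CC_1 = \{1,\ldots,n/2\}$ and $-1$ on $\CC_2 = \{n/2+1,\ldots,n\}$, one immediately checks that
\[
R \;=\; \frac{p-q}{2}\,\bar x\,\bar x^\tran.
\]
Since $p > q$, maximizing $\ip{R}{Z}$ over $\Mopt = \Mgr^+$ is equivalent to maximizing the scalar quadratic form $\bar x^\tran Z \bar x$ over $\{Z\succeq 0,\ \diag(Z)\preceq\one_n\}$.

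Next I would bound this quadratic form using a PSD factorization. Write $Z = X^\tran X$; the columns $X_i$ of $X$ then satisfy $\|X_i\|_2^2 = Z_{ii} \le 1$, and
\[
\bar x^\tran Z \bar x \;=\; \Big\|\sum_{i=1}^n \bar x_i X_i\Big\|_2^2 \;\le\; \Big(\sum_{i=1}^n \|X_i\|_2\Big)^2 \;\le\; n^2,
\]
by the triangle inequality followed by the diagonal constraint. On the other hand, the matrix $Z_0 := \bar x \bar x^\tran$ is positive semidefinite with unit diagonal, so it lies in $\Mopt$, and clearly $\bar x^\tran Z_0 \bar x = (\bar x^\tran \bar x)^2 = n^2$. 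Thus $Z_0$ attains the bound and is therefore a maximizer.

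Finally, I would simply expand $\bar x \bar x^\tran$ in the block structure induced by the partition $\CC_1,\CC_2$: entries are $+1$ when both indices lie in the same community and $-1$ otherwise, matching exactly the block matrix displayed in the statement of the lemma. If uniqueness of the maximizer is wanted, the equality cases of the two inequalities above force $\|X_i\|_2 = 1$ for every $i$ and all vectors $\bar x_i X_i$ to be positively proportional to a common unit vector $v$; hence $X_i = \bar x_i v$ and $Z = \bar x \bar x^\tran$ is the unique maximizer. There is no genuine obstacle here; once one notices that $R$ is rank one, the argument is a short SDP relaxation calculation.
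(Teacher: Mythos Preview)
Your proof is correct but takes a genuinely different route from the paper's. The paper's argument is more elementary: it relaxes the feasible set $\Mopt$ to the full box $[-1,1]^{n \times n}$ (which contains $\Mopt$ by \eqref{eq: Mgr+}), observes that on the box the linear functional $\ip{R}{Z} = \sum_{ij} R_{ij} Z_{ij}$ is maximized entrywise by taking $Z_{ij} = \sign(R_{ij})$, and checks that the resulting matrix is precisely the block matrix $Z_R$ in the statement; since $Z_R$ happens to lie in $\Mopt$, it is the maximizer there too. No factorization or norm inequality is needed.

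Your approach instead exploits the rank-one structure $R = \frac{p-q}{2}\,\bar x\,\bar x^\tran$ and works directly inside $\Mopt$ via the factorization $Z = X^\tran X$ and the triangle inequality. Both arguments are short; the paper's is slightly quicker because it avoids any Gram representation. Your argument, on the other hand, makes the equality case transparent and delivers uniqueness of the maximizer essentially for free, and it would generalize to maximizing $v^\tran Z v$ over $\Mgr^+$ for an arbitrary vector $v$. The paper's box-relaxation trick, by contrast, relies on every entry of $R$ being nonzero (here $\pm\frac{p-q}{2}$ with $p>q$) to pin down a unique entrywise optimum.
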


\begin{proof}
Let us first evaluate the maximizer of $\ip{R}{Z}$ on the larger set $[-1,1]^{n \times n}$, 
which contains the feasible set $\Mopt$ according to \eqref{eq: Mgr+}.
Taking into account the form of $R$ in \eqref{eq: R},  
one can quickly check that the maximizer of $\ip{R}{Z}$ 
on $[-1,1]^{n \times n}$ is $Z_R$. Since $Z_R$ belongs to the smaller
set $\Mopt$, it must be the maximizer on that set as well.
\end{proof}

\subsection{Bounding the error}
%................

We are going to conclude from Lemma~\ref{lem: deviation} and Lemma~\ref{lem: almost max} that the maximizer of the actual objective function, 
$$
\Zhat = \arg \max_{Z \in \Mopt} \ip{B}{Z},
$$
must be close to $Z_R$, the maximizer of the reference objective function.

\begin{lemma}[Maximizers of random and reference functions are close]		\label{lem: Zhat-Zbar}
  Assume that $\pbar$ satisfies \eqref{eq: pbar}.
  Then, with probability at least $1-  e^3 5^{-n}$, we have 
  $$
  \|\Zhat - Z_R\|_2^2 \le \frac{116 \, \pbar^{1/2} n^{3/2}}{p-q}.
  $$
\end{lemma}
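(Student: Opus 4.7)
The plan is to combine Lemma~\ref{lem: almost max} (which bounds the reference-objective gap $\ip{R}{Z_R-\Zhat}$ by the cut norm of $B-R$) with a convexity-type trick that upgrades this objective gap into a Frobenius-norm distance, and then to bound $\|B-R\|_{\infty\to 1}$ using Lemma~\ref{lem: deviation} and Remark~\ref{rem: sum of entries}.

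The starting observation is that the reference matrix is proportional to its own maximizer: inspecting \eqref{eq: R} and Lemma~\ref{lem: Zbar} gives $R = \tfrac{p-q}{2} Z_R$. Hence
\[
\ip{R}{Z_R - \Zhat} = \frac{p-q}{2}\,\ip{Z_R}{Z_R - \Zhat}.
\]
The key conceptual step is then the inequality
\[
\|Z_R - \Zhat\|_2^2 \le 2\,\ip{Z_R}{Z_R - \Zhat}.
\]
Expanding the square, this reduces to $\|\Zhat\|_2^2 \le \|Z_R\|_2^2 = n^2$, which holds because $\Zhat\in\Mgr^+$ forces $|\Zhat_{ij}|\le 1$ by \eqref{eq: Mgr+}, while $Z_R$ already has all entries $\pm 1$. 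In other words, $Z_R$ maximizes the Frobenius norm on $\Mopt$, and this is exactly what converts an objective gap into a squared distance. Combined with Lemma~\ref{lem: almost max}, this yields
\[
\|Z_R - \Zhat\|_2^2 \;\le\; \frac{4}{p-q}\,\ip{R}{Z_R - \Zhat} \;\le\; \frac{8 K_\Gr}{p-q}\,\|B - R\|_{\infty\to 1}.
\]

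It remains to bound the cut norm $\|B-R\|_{\infty\to 1}$. Using \eqref{eq: Abar Bbar}, I would decompose
\[
B - R = (A - \E A) - (\lambda - \E\lambda)\,E_n + (1-p)\,\one_n - \frac{p-q}{n-1}\,E_n
\]
and apply the triangle inequality. The random pieces are controlled by Lemma~\ref{lem: deviation} (which gives $\|A-\E A\|_{\infty\to 1}\le 3\pbar^{1/2}n^{3/2}$) and by Remark~\ref{rem: sum of entries} (which gives $|\lambda - \E\lambda|\le 3\pbar^{1/2}n^{-1/2}$, contributing another $3\pbar^{1/2}n^{3/2}$ since $\|E_n\|_{\infty\to 1}=n^2$). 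The two deterministic tail pieces contribute $(1-p)\|\one_n\|_{\infty\to 1}=(1-p)n$ and $\tfrac{p-q}{n-1}\|E_n\|_{\infty\to 1}\le 2(p-q)n$, both of order $n$, which under the hypothesis $\pbar\ge 9/n$ satisfy $n\le \tfrac13\pbar^{1/2}n^{3/2}$ and can be absorbed. Combining everything gives $\|B-R\|_{\infty\to 1}\le C_0\,\pbar^{1/2}n^{3/2}$ for an explicit constant, and plugging $K_\Gr\le 1.783$ from \eqref{eq: Grothendieck constant} should produce the numerical factor $116$ after routine arithmetic.

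The main obstacle is not conceptual but arithmetic: careful bookkeeping is needed to verify that the deterministic corrections $(1-p)\one_n$ and $\tfrac{p-q}{n-1}E_n$ in $\E B - R$ really are absorbed into $O(\pbar^{1/2}n^{3/2})$ under $\pbar \ge 9/n$, and to track all multiplicative constants so they produce $116$ rather than something larger. The genuinely interesting step is the bound $\|Z_R - \Zhat\|_2^2 \le 2\ip{Z_R}{Z_R - \Zhat}$; without it one would only get an inequality in terms of the objective gap, not the Frobenius distance required for Theorem~\ref{thm: community detection}.
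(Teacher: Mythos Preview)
Your proposal is correct and follows essentially the same route as the paper's proof. The paper expands $\|\Zhat - Z_R\|_2^2 = \|\Zhat\|_2^2 + \|Z_R\|_2^2 - 2\ip{\Zhat}{Z_R}$ and bounds $\|\Zhat\|_2^2 \le \|Z_R\|_2^2$, which is exactly your inequality $\|Z_R - \Zhat\|_2^2 \le 2\ip{Z_R}{Z_R-\Zhat}$ written differently; it then uses $R = \tfrac{p-q}{2}Z_R$, Lemma~\ref{lem: almost max}, the same decomposition of $B-R$ via \eqref{eq: Abar Bbar}, Lemma~\ref{lem: deviation}, and Remark~\ref{rem: sum of entries}, arriving at $\|B-R\|_{\infty\to 1}\le 8\,\pbar^{1/2}n^{3/2}$ and hence the constant $116$ after multiplying by $8K_\Gr\le 58$ and doubling.
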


\begin{proof}
We expand
\begin{equation}							\label{eq: Zhat-Zbar expand}
\|\Zhat - Z_R\|_2^2 = \|\Zhat\|_2^2 + \|Z_R\|_2^2 - 2 \ip{\Zhat}{Z_R}
\end{equation}
and control the three terms separately. 

Note that $\|\Zhat\|_2^2 \le n^2$ since $\Zhat \in \Mopt \subset [-1,1]^{n \times n}$ 
according to \eqref{eq: Mgr+}. Next, we have 
$\|Z_R\|_2^2 = n^2$ by Lemma~\ref{lem: Zbar}. Thus 
\begin{equation}         \label{eq: Zhat smaller}
\|\Zhat\|_2^2 \le \|Z_R\|_2^2.
\end{equation}
Finally, we use Lemma~\ref{lem: almost max} to control the cross term in \eqref{eq: Zhat-Zbar expand}. 
To do this, notice that \eqref{eq: R} and Lemma~\ref{lem: Zbar} imply that  
$R = \frac{p-q}{2} \cdot Z_R$. 
Then, by homogeneity, the conclusion of Lemma~\ref{lem: almost max} implies that
\begin{equation}							\label{eq: ZR Zhat begin}
\ip{Z_R}{\Zhat} 
\ge \ip{Z_R}{Z_R} -\frac{4 K_\Gr}{p-q} \| R-B \|_{\infty \to 1}.
\end{equation}
To bound the norm of $R-B$, let us express this matrix as 
\begin{equation}							\label{eq: B-R}
B - R 
= (B - \E B) + (\E B - R) 
= (A - \E A) - (\l - \E \l) E_n + (\E B - R)
\end{equation}
and bound each of the three terms separately.
According to Lemma~\ref{lem: deviation} and Remark~\ref{rem: sum of entries}, 
we obtain that with probability larger than $1 - e^3 5^{-n}$, 
\[
\|A - \E A\|_{\infty \to 1} \le 3 \pbar^{1/2} n^{3/2} 
\quad \text{and} \quad
|\l - \E \l | \le 3 \pbar^{1/2} n^{-1/2}.
\]
Moreover, according to \eqref{eq: Abar Bbar},
$$
\E B - R = (1-p) \one_n - \frac{p-q}{n-1} E_n.
$$
Substituting these bounds into \eqref{eq: B-R} and using triangle inequality 
along with the facts that $\|E_n\|_{\infty \to 1} = n^2$, $\|\one_n\|_{\infty \to 1} = n$, we obtain
$$
\| B - R \|_{\infty \to 1} \le 6 \pbar^{1/2} n^{3/2} + (1-p)n + \frac{(p-q) n^2}{n-1}.
$$
Since $\pbar \ge 9/n$, one can check that each of the last two terms is bounded by $\pbar^{1/2} n^{3/2}$.
Thus we obtain
$\| B - R \|_{\infty \to 1} \le 8 \pbar^{1/2} n^{3/2}$.
Substituting into \eqref{eq: ZR Zhat begin}, we conclude that
\[
\ip{Z_R}{\Zhat} 
\ge \ip{Z_R}{Z_R} - 8  \pbar^{1/2} n^{3/2} \cdot \frac{4 K_\Gr}{p-q}.
\]
Recalling from \eqref{eq: Grothendieck constant} that Grothendieck's constant $K_\Gr$ 
is bounded by $1.783$, we can replace $8 \cdot 4 K_\Gr$ by $58$ in this bound. 
Substituting it and \eqref{eq: Zhat smaller} and into \eqref{eq: Zhat-Zbar expand}, 
we conclude that 
$$
\|\Zhat - Z_R\|_2^2 \le 2 \|Z_R\|_2^2 - 2 \ip{\Zhat}{Z_R} 
\le \frac{116 \, \pbar^{1/2} n^{3/2}}{p-q}.
$$
The proof of Lemma~\ref{lem: Zhat-Zbar} is complete.
\end{proof}

\medskip

\begin{proof}[Proof of Theorem~\ref{thm: community detection}]
The conclusion of the theorem will quickly follow from Lemma~\ref{lem: Zhat-Zbar}. 
Let us check the lemma's assumption \eqref{eq: pbar} on $\pbar$.
A quick computation yields
\begin{equation}	\label{eq:estimation pbar}
\pbar = \frac{2}{n(n-1)} \sum_{i <j} \Var(a_{ij})
= \frac{p(1-p)(n-2)}{2(n-1)} + \frac{q(1-q)n}{2(n-1)}. 
\end{equation}
Since $p(1-p) \le 1/4$, we get 
\[
\pbar \ge \frac{1}{2} \max \left\{ p(1-p), q(1-q) \right\} - \frac{1}{8(n-1)} > \frac{9}{n}
\]
where the last inequality follows from an assumption of Theorem~\ref{thm: community detection}.
Thus the assumption \eqref{eq: pbar} holds, and we can apply Lemma~\ref{lem: Zhat-Zbar}. 
It states that 
\begin{equation}         \label{eq: Zhat-xxtran prelim}
\|\Zhat - Z_R\|_2^2 
\le \frac{116 \, \pbar^{1/2} n^{3/2}}{p-q}
\end{equation}
with probability at least $1-  e^3 5^{-n}$.
From \eqref{eq:estimation pbar}, it is not difficult to see that 
$\pbar \le \frac{p+q}{2}$.
%Indeed, recall that $p=a/n$, $q=b/n$. Using $p^2 \le p$ and $a > b$, we get that
%$2 p^2 + p + q - n(p^2 + q^2) \le (3a+b - (a^2 + b^2)) / n \le (4a - a^2)/n$ which is  negative since $a >2$ 
%by \eqref{eq: ab}. 
Substituting this into \eqref{eq: Zhat-xxtran prelim} and expressing $p = a/n$ and $q=b/n$, we conclude that
$$
\|\Zhat - Z_R\|_2^2 
\le \frac{116 \sqrt{(a+b)/2}}{a-b} \cdot n^2. 
$$
Rearranging the terms, we can see that this expression is bounded by $\e n^2$ if 
\[
(a-b)^2 \ge 7 \cdot 10^3 \e^{-2} (a+b).
\]
But this inequality follows from the assumption \eqref{eq: ab}. 

It remains to recall that according to Lemma~\ref{lem: Zbar}, we have
$Z_R = \xbar \xbar^\tran$ where $\xbar = [\onevector_{n/2} \; -\onevector_{n/2}] \in \R^n$
is the community membership vector defined in \eqref{eq: xbar}. 
Theorem~\ref{thm: community detection} is proved.
\end{proof}

\begin{proof}[Proof of Corollary \ref{cor: vector recovery}.]
The result follows from Davis-Kahan Theorem \cite{DK} 
about the stability of the eigenvectors under matrix perturbations.  
The largest eigenvalue of $ \xbar \xbar^\tran$ is $n$ while all the others are 0, 
so the spectral gap equals $n$. 
Expressing $\Zhat = (\Zhat - \xbar \xbar^\tran )+ \xbar \xbar^\tran$ 
and using that $\| \Zhat - \xbar \xbar^\tran \|_2 \le \sqrt \e n$, 
we obtain from Davis-Kahan's theorem  (see for example Corollary 3 in \cite{Vu}) that 
\[
\|\widehat{v} - \bar{v} \|_2 = 2 | \sin (\theta/2) | \le C \sqrt{\e}.
\] 
Here $\hat{v}$ and $\bar{v}$ denote the unit-norm eigenvectors associated to the largest eigenvalues of 
$\Zhat$ and $\xbar \xbar^\tran$ respectively, 
and $\theta \in [0, \pi/2]$ is the angle between these two vectors. By definition, $\xhat = \sqrt n \widehat{v}$
and $\xbar = \sqrt n \bar{v}$.
This concludes the proof.
\end{proof}

\section{General stochastic block model: proof of Theorem~\ref{thm: community detection general}}	\label{s: community detection general}
%-------------------------

In this section we focus on the community detection problem for the 
general stochastic block-model considered in Theorem~\ref{thm: community detection general}.
The semidefinite program \eqref{eq: SDP sum fixed} is a particular case of the 
class of problems \eqref{eq: SDP general} that we analyzed in Section~\ref{s: SDP on Grothendieck}.
In our case, we set $B:=A$, choose the reference matrix to be 
$$
R := \bar{A} = \E A,
$$ 
and consider the feasible set 
$$
\Mopt := \Big\{ Z \succeq 0, \; Z \ge 0, \; \diag(Z) \preceq \one_n, 
  \; \sum_{i,j=1}^n Z_{ij} = \lambda \Big\}.
$$
Then $\Mopt$ is a subset of the Grothendieck's set $\Mgr^+$ defined in \eqref{eq: Mgr+}. 
Using \eqref{eq: Mgr+}, we see that
\begin{equation}							\label{eq: Mopt subset sum fixed}
\Mopt \subset \Big\{ Z :\; 0 \le Z_{ij} \le 1 \text{ for all } i,j; \; \; \sum_{i,j=1}^n Z_{ij} = \lambda \Big\}.
\end{equation}

\subsection{The maximizer of the expected objective function}
%............

Unlike before, the reference matrix 
$R = \bar{A} = \E A = (p_{ij})_{i,j=1}^n$ is not necessarily 
a block matrix like in \eqref{eq: R} since the edge probabilities $p_{ij}$ 
may be different for all $i<j$.
However, we will observe that the solution $Z_R$ of the reference problem \eqref{eq: SDP reference} 
is a block matrix, and it is in fact the community membership matrix $\bar{Z}$ defined in \eqref{eq: Zbar membership}.

\begin{lemma}[The maximizer of the expected objective function]				\label{lem: Zbar sum fixed}
  We have 
  \begin{equation}							\label{eq: Zbar sum fixed}
  Z_R := \arg \max_{Z \in \Mopt} \ip{\bar{A}}{Z} = \bar{Z}.
  \end{equation}
\end{lemma}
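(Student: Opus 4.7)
The plan is to exploit the enlarged feasible set \eqref{eq: Mopt subset sum fixed} and reduce the problem to a trivial linear program. Specifically, I will show that $\bar Z$ maximizes $\ip{\bar A}{Z}$ on the relaxation
\[
\widetilde{\Mopt} := \Big\{ Z \in \R^{n \times n} :\; 0 \le Z_{ij} \le 1 \text{ for all } i,j, \; \; \sum_{i,j=1}^n Z_{ij} = \lambda \Big\},
\]
which contains $\Mopt$. Once this is done, since $\bar Z$ itself lies in $\Mopt$ (it is PSD as the sum of the rank-one matrices $\onevector_{\CC_k} \onevector_{\CC_k}^\tran$, has entries in $\{0,1\}$, unit diagonal, and the total sum of its entries equals $\lambda$ by the very definition \eqref{eq: lambda general}), the identity \eqref{eq: Zbar sum fixed} will follow immediately.

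For the key step, I would split the index set $[n] \times [n]$ into the ``same-community'' pairs $S = \{(i,j) : \bar Z_{ij} = 1\}$ and the ``different-community'' pairs $D = \{(i,j) : \bar Z_{ij} = 0\}$. By \eqref{eq: lambda general} we have $|S| = \lambda$. For any $Z \in \widetilde{\Mopt}$, the sum constraint $\sum Z_{ij} = \lambda$ rewrites as
\[
\sum_{(i,j) \in D} Z_{ij} \;=\; \sum_{(i,j) \in S} (1 - Z_{ij}),
\]
and both sides are non-negative since $0 \le Z_{ij} \le 1$. Writing $\ip{\bar A}{Z - \bar Z} = \sum_{(i,j) \in D} p_{ij} Z_{ij} - \sum_{(i,j) \in S} p_{ij} (1 - Z_{ij})$ and applying the threshold bounds $p_{ij} \le q$ on $D$ and $p_{ij} \ge p$ on $S$ from \eqref{eq: thresholds} (noting that the diagonal case $p_{ii} = 1 \ge p$ belongs to $S$ and causes no trouble), we obtain
\[
\ip{\bar A}{Z} - \ip{\bar A}{\bar Z} \;\le\; q \sum_{(i,j) \in D} Z_{ij} - p \sum_{(i,j) \in S} (1 - Z_{ij}) \;=\; (q-p) \sum_{(i,j) \in D} Z_{ij} \;\le\; 0,
\]
using $q < p$ in the last step. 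Hence $\bar Z$ maximizes $\ip{\bar A}{\cdot}$ on $\widetilde{\Mopt}$, and therefore on $\Mopt \subset \widetilde{\Mopt}$.

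There is no real obstacle here: the delicate design is in the choice of $\lambda$. Because $\lambda$ was picked to be exactly the cardinality of the ``same-community'' index set, the LP relaxation over $\widetilde{\Mopt}$ has an integral optimizer, namely the indicator of $S$, which coincides with $\bar Z$. In other words, the PSD and non-negativity constraints do not bind at all for the expected problem; only the box and sum constraints matter. The mild subtlety to note is that \eqref{lem: Zbar sum fixed} identifies $Z_R$ uniquely only when the threshold inequality is strict on every cross-community pair; for the proof of Theorem~\ref{thm: community detection general} this will be irrelevant, since we only need $\bar Z$ to be \emph{a} maximizer.
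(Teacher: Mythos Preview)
Your proof is correct and follows the same overall strategy as the paper: relax $\Mopt$ to the box-plus-sum polytope $\widetilde{\Mopt}$, show $\bar Z$ maximizes $\ip{\bar A}{\cdot}$ there, and observe $\bar Z \in \Mopt$. The only difference is in how optimality on $\widetilde{\Mopt}$ is established. The paper argues via extreme points: the linear form attains its maximum at an extreme point of $\widetilde{\Mopt}$, these are the $0/1$ matrices with exactly $\lambda$ ones, and the best such matrix places its ones on the $\lambda$ largest entries of $\bar A$, which are precisely the within-community entries. You instead compute $\ip{\bar A}{Z - \bar Z}$ directly and bound it using the threshold assumption and the sum constraint. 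Your argument is slightly more elementary (no need to identify extreme points) and in fact anticipates the paper's Lemma~\ref{lem: distinguishes points}, which carries out the same decomposition to obtain the sharper bound $\ip{\bar A}{\bar Z - Z} \ge \frac{p-q}{2}\|\bar Z - Z\|_1$; your inequality is the special case that the left side is nonnegative.
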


\begin{proof}
Let us first compute the maximizer on the larger set $\Mopt'$, which contains 
the feasible set $\Mopt$ according to \eqref{eq: Mopt subset sum fixed}.
The maximum of 
the linear form $\ip{\bar{A}}{Z}$ on the convex set $\Mopt'$ is attained
at an extreme point. These extreme points are $0/1$ matrices
with $\l$ ones. Thus the maximizer of $\ip{\bar{A}}{Z}$ 
has the ones at the locations of the $\l$ largest entries of $\bar{A}$.

From the definition of the general stochastic block model we can recall that
$\bar{A} = (p_{ij})$ has two types of entries. The entries larger than $p$ 
form the community blocks $\CC_k \times \CC_k$, $k=1,\ldots,K$.
The number of such large entries is the same as the number of ones in the 
community membership matrix $\bar{Z}$, which in turn equals 
$\lambda$ by the choice we made in Theorem~\ref{thm: community detection general}.
All other entries of $\bar{A}$ are smaller than $q$.
Thus the $\l$ largest entries of $\bar{A}$ form the community blocks 
$\CC_k \times \CC_k$, $k=1,\ldots,K$.

Summarizing, we have shown that the maximizer of $\ip{\bar{A}}{Z}$ on the set 
$\Mopt'$ is a $0/1$ matrix 
with ones forming the community blocks $\CC_k \times \CC_k$, $k=1,\ldots,K$.
Thus the maximizer is the community membership matrix $\bar{Z}$ from \eqref{eq: Zbar membership}.
Since $\bar{Z}$ belongs to the smaller set $\Mopt$, 
it must be the maximizer on that set as well.
\end{proof}

\subsection{Bounding the error}
%................

We are going to conclude from Lemma~\ref{lem: deviation} and 
Lemma~\ref{lem: almost max} that the maximizer of the actual objective function, 
$$
\Zhat = \arg \max_{Z \in \Mopt} \ip{A}{Z},
$$
must be close to $\bar{Z}$, the maximizer of the reference objective function.
We will first show that {\em the reference objective function 
$\ip{\bar{A}}{Z}$ distinguishes points near its maximizer $\bar{Z}$}.

\begin{lemma}[Expected objective function distinguishes points]		\label{lem: distinguishes points}
  Every $Z \in \Mopt$ satisfies
  \begin{equation}							\label{eq: distinguishes points}
  \ip{\bar{A}}{\bar{Z} - Z} \ge \frac{p-q}{2} \, \|\bar{Z} - Z\|_1.
  \end{equation}
\end{lemma}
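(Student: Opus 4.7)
The plan is to exploit the sign pattern of $\bar Z - Z$ together with the normalization constraint $\sum_{i,j} Z_{ij} = \lambda$. Partition the index set into $S := \{(i,j) : \bar Z_{ij} = 1\}$ (same-community pairs, including the diagonal) and $D := \{(i,j) : \bar Z_{ij} = 0\}$. Since $Z \in \Mopt$ satisfies $0 \le Z_{ij} \le 1$ (using $\diag(Z) \preceq \one_n$ together with the positive semidefiniteness implying $|Z_{ij}| \le \sqrt{Z_{ii} Z_{jj}} \le 1$, or more simply via the inclusion $\Mopt \subset \Mgr^+$), the difference satisfies $(\bar Z - Z)_{ij} \ge 0$ on $S$ and $(\bar Z - Z)_{ij} \le 0$ on $D$.

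Next, I would use the constraint that forces the total mass of $Z$ to equal that of $\bar Z$, namely $\sum_{i,j}(\bar Z - Z)_{ij} = \lambda - \lambda = 0$. Setting
$$
\tau := \sum_{(i,j) \in S} (\bar Z - Z)_{ij} = -\sum_{(i,j) \in D} (\bar Z - Z)_{ij} \ge 0,
$$
the $\ell_1$ norm decomposes exactly as
$$
\|\bar Z - Z\|_1 = \sum_{(i,j) \in S} (\bar Z - Z)_{ij} + \sum_{(i,j) \in D} (Z - \bar Z)_{ij} = 2\tau.
$$

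Finally, I would invoke the threshold assumption \eqref{eq: thresholds}: on $S$ one has $p_{ij} \ge p$ multiplied by a nonnegative factor $(\bar Z - Z)_{ij}$, while on $D$ one has $p_{ij} \le q$ multiplied by a nonpositive factor. Both inequalities push in the right direction, so
$$
\ip{\bar A}{\bar Z - Z} = \sum_S p_{ij} (\bar Z - Z)_{ij} + \sum_D p_{ij} (\bar Z - Z)_{ij} \ge p \tau - q \tau = (p-q)\tau = \frac{p-q}{2} \|\bar Z - Z\|_1.
$$

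The argument is essentially a linear-programming duality observation and there is no serious obstacle; the only points requiring a moment of care are (i) the diagonal entries, where $p_{ii} = 1 \ge p$ and $\bar Z_{ii} = 1 \ge Z_{ii}$ fit the same-community regime seamlessly, and (ii) ensuring $Z_{ij} \le 1$ everywhere, which is what lets one split $\|\bar Z - Z\|_1$ cleanly into its positive and negative parts. It is worth emphasizing that this clean distinguishability is what makes the constraint $\sum_{i,j} Z_{ij} = \lambda$ in \eqref{eq: SDP sum fixed} pull its weight: without it, $Z$ could shrink toward zero, and the matching of positive and negative contributions in the $\ell_1$ norm would fail.
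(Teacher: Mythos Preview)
Your proof is correct and follows essentially the same approach as the paper: split the index set into same-community and cross-community pairs, use the sign pattern of $\bar Z - Z$ on each part together with the thresholds $p_{ij}\ge p$ and $p_{ij}\le q$, and then exploit the equal-sum constraint $\sum_{i,j}Z_{ij}=\sum_{i,j}\bar Z_{ij}=\lambda$ to match the positive and negative contributions. The paper's $S_{\In}$ and $S_{\Out}$ are exactly your $\tau$, and the identities $S_{\In}=S_{\Out}$ and $S_{\In}+S_{\Out}=\|\bar Z - Z\|_1$ are your $\|\bar Z - Z\|_1 = 2\tau$.
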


\begin{proof}
We will prove that the conclusion holds for every $Z$ in the larger set $\Mopt'$, 
which contains the feasible set $\Mopt$ according to \eqref{eq: Mopt subset sum fixed}.
Expanding the inner product, we can represent it as
$$
\ip{\bar{A}}{\bar{Z} - Z} = \sum_{i,j=1}^n p_{ij} (\bar{Z}-Z)_{ij}
= \sum_{(i,j) \in \In} p_{ij} (\bar{Z}-Z)_{ij} - \sum_{(i,j) \in \Out} p_{ij} (Z - \bar{Z})_{ij}
$$
where $\In$ and $\Out$ denote the set of edges that run within and across the communities, 
respectively. Formally,
$\In = \cup_{k=1}^K (\CC_k \times \CC_k)$ and 
$\Out =\{1,\ldots, n\}^2 \setminus \In$.

For the edges $(i,j) \in \In$, we have $p_{ij} \ge p$ and $(\bar{Z}-Z)_{ij} \ge 0$ since 
$\bar{Z}_{ij}=1$ and $Z_{ij} \le 1$.
Similarly, for the edges $(i,j) \in \Out$, we have $p_{ij} \le q$ and $(Z - \bar{Z})_{ij} \ge 0$ since 
$\bar{Z}_{ij}=0$ and $Z_{ij} \ge 0$. It follows that 
\begin{equation}							\label{eq: pSin-qSout}
\ip{\bar{A}}{\bar{Z} - Z} 
\ge p S_{\In} - q S_{\Out}
\end{equation}
where 
$$
S_{\In} = \sum_{(i,j) \in \In} (\bar{Z}-Z)_{ij} 
\quad \text{and} \quad 
S_{\Out} = \sum_{(i,j) \in \Out} (Z - \bar{Z})_{ij}.
$$
Since both $\bar{Z}$ and $Z$ belong to $\Mopt$, the sum of all entries of both 
these matrices is the same ($n^2/2$), so we have 
\begin{equation}							\label{eq: Sin-Sout}
S_{\In} - S_{\Out} 
= \sum_{i,j=1}^n \bar{Z}_{ij} - \sum_{i,j=1}^n Z_{ij}
=0.
\end{equation}
On the other hand, as we already noticed, the terms in the sums that make 
$S_{\In}$ and $S_{\Out}$ are all non-negative. Therefore
\begin{equation}							\label{eq: Sin+Sout}
S_{\In} + S_{\Out} 
= \sum_{i,j=1}^n |(\bar{Z}-Z)_{ij}|
= \|\bar{Z}-Z\|_1. 
\end{equation}
Substituting \eqref{eq: Sin-Sout} and \eqref{eq: Sin+Sout} into \eqref{eq: pSin-qSout}, 
we obtain the conclusion \eqref{eq: distinguishes points}.
\end{proof}

Now we are ready to conclude that $\Zhat \approx \bar{Z}$.

\begin{lemma}[Maximizers of random and expected functions are close]		\label{lem: Zhat-Zbar sum fixed}
  Assume that $\pbar$ satisfies \eqref{eq: pbar}.
  With probability at least $1-  e^3 5^{-n}$, we have
  $$
  \|\Zhat - \bar{Z}\|_1 \le \frac{12 \,K_\Gr \, p_0^{1/2} n^{3/2}}{p-q}.
  $$
\end{lemma}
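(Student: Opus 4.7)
The plan is to chain together three ingredients already established in the paper: the ``almost maximizer'' bound from Lemma~\ref{lem: almost max}, the distinguishing property of the reference objective from Lemma~\ref{lem: distinguishes points}, and the cut-norm deviation estimate from Lemma~\ref{lem: deviation}.

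First, I would invoke Lemma~\ref{lem: almost max} in the present setting, where $B = A$, $R = \bar A$, and, by Lemma~\ref{lem: Zbar sum fixed}, $Z_R = \bar Z$. The lower half of its conclusion, after rearranging, is the deterministic bound
\[
\ip{\bar A}{\bar Z - \Zhat} \;\le\; 2 K_\Gr \, \|A - \bar A\|_{\infty \to 1}.
\]

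Second, since $\Zhat \in \Mopt$, I can apply Lemma~\ref{lem: distinguishes points} with $Z = \Zhat$ to obtain a matching lower bound
\[
\ip{\bar A}{\bar Z - \Zhat} \;\ge\; \frac{p - q}{2}\, \|\bar Z - \Zhat\|_1.
\]
Chaining the two displays produces the purely deterministic estimate
\[
\|\Zhat - \bar Z\|_1 \;\le\; \frac{4 K_\Gr}{p - q}\, \|A - \bar A\|_{\infty \to 1}.
\]

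The third and final step is to control the cut norm of the noise probabilistically. Under the variance hypothesis \eqref{eq: pbar}, Lemma~\ref{lem: deviation} supplies $\|A - \bar A\|_{\infty \to 1} \le 3\, \pbar^{1/2} n^{3/2}$ on an event of probability at least $1 - e^3 5^{-n}$. Inserting this bound into the previous display gives the claimed inequality $\|\Zhat - \bar Z\|_1 \le 12 K_\Gr\, \pbar^{1/2} n^{3/2}/(p-q)$.

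I do not anticipate any serious obstacle, since the three ingredients are already in hand and the argument is essentially a two-line concatenation. The one point worth flagging is that Lemma~\ref{lem: distinguishes points} is invoked at $Z = \Zhat \in \Mopt$, so in particular the total-mass constraint $\sum_{i,j} \Zhat_{ij} = \lambda = \sum_{i,j} \bar Z_{ij}$ holds; this equality is precisely what drives the cancellation $S_{\In} - S_{\Out} = 0$ in the proof of that lemma and is therefore what makes the distinguishing inequality usable here.
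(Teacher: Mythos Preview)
Your proposal is correct and follows essentially the same approach as the paper's proof: chain Lemma~\ref{lem: distinguishes points}, Lemma~\ref{lem: almost max} (with $R=\bar A$, $Z_R=\bar Z$), and Lemma~\ref{lem: deviation} to obtain the string of inequalities
\[
\|\Zhat - \bar Z\|_1 \le \frac{2}{p-q}\,\ip{\bar A}{\bar Z - \Zhat} \le \frac{4K_\Gr}{p-q}\,\|A-\bar A\|_{\infty\to 1} \le \frac{12K_\Gr}{p-q}\,\pbar^{1/2}n^{3/2}.
\]
Your remark on the role of the total-mass constraint is a helpful clarification but does not depart from the paper's argument.
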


\begin{proof}
Using first Lemma~\ref{lem: distinguishes points}, 
Lemma~\ref{lem: almost max} (with $R=\bar{A}$ and $Z_R=\bar{Z}$ as before)
and then Lemma \ref{lem: deviation}, we obtain
$$
\|\Zhat - \bar{Z}\|_1 
\le \frac{2}{p-q} \, \ip{\bar{A}}{\bar{Z} - \Zhat} \le \frac{4 K_\Gr}{p-q} \| A - \bar A\|_{\infty \to 1}
\le \frac{12 K_\Gr}{p-q} \pbar^{1/2} n^{3/2}
$$
with probability at least $1-  e^3 5^{-n}$.
Lemma~\ref{lem: Zhat-Zbar sum fixed} is proved.
\end{proof}

\medskip

\begin{proof}[Proof of Theorem~\ref{thm: community detection general}]
The conclusion follows from Lemma~\ref{lem: Zhat-Zbar sum fixed}. 
Indeed, substituting $p=a/n$, $q=b/n$ and $\pbar = g/n$ and rearranging the terms, we obtain 
$$
\|\Zhat - \bar{Z}\|_1 \le \frac{12 K_\Gr g^{1/2}}{a-b} \cdot n^2 \le \frac{22 g^{1/2}}{a-b} \cdot n^2
$$
since we know form \eqref{eq: Grothendieck constant} that Grothendieck's constant $K_\Gr$ 
is bounded by $1.783$.
Rearranging the terms, we can see that this expression is bounded by $\e n^2$ if 
$(a-b)^2 \ge  484 \, \e^{-2} g$, which is our assumption \eqref{eq: ab general}. 
This proves the required bound for the $\|\cdot\|_1$ norm. 

Since for any sequence $\sum |b_{i,j}|^2 \le \max |b_{i,j}| \sum |b_{i,j}|$, we get 
$$
\|\Zhat - \bar{Z}\|_2^2
\le \|\Zhat - \bar{Z}\|_\infty \cdot \|\Zhat - \bar{Z}\|_1.
$$
As we noted in \eqref{eq: Mopt subset sum fixed}, all entries of $\Zhat$ and $\bar{Z}$ 
belong to $[0,1]$ hence $\|\Zhat - \bar{Z}\|_\infty \le 1$. The bound for the Frobenius norm follows and
Theorem~\ref{thm: community detection general} is proved.
\end{proof}

\section{The balanced planted partition model.}					\label{s: balanced planted partition}
%-------------------

In this section, we will show that a direct generalization of the semidefinite program \eqref{eq: SDP lambda} 
(instead a completely different program \eqref{eq: SDP sum fixed}) is capable of 
detecting multiple communities of equal sizes. 

Assume that a graph on $n$ vertices $\{1, \ldots, n\}$ is partitioned in $K$ communities $\CC_1, \ldots, \CC_K$
of equal sizes $s=n/K$. For each pair of distinct vertices, we draw an edge with probability $p$ if both vertices 
belong to the same community and with probability $q$ if not. 
This is known as the {\em balanced partition model}. 

Let $A$ be the adjacency matrix of the random graph drawn according to the balanced partition model. 
The community structure of such a network is not only captured by
the community membership matrix $\Zbar$ defined in \eqref{eq: Zbar membership} 
but also by the matrix $\Pbar$ defined as 
\begin{equation}							\label{eq: Zbar membership - K}
\bar{P}_{ij} = 
\begin{cases}
  \frac{K-1}{n} & \text{if $i$ and $j$ belong to the same community}; \\
  -\frac{1}{n}, & \text{if $i$ and $j$ belong to different communities}.
\end{cases}
\end{equation}
As we will see, $\bar{P}$ is an orthogonal projection with rank $K-1$.
We are going to estimate $\bar{P}$ using 
the following semidefinite optimization problem:
\begin{equation}			\label{eq: SDP lambda - K}
\begin{aligned}
  &\text{maximize } \ip{A}{Z} - \l \ip{E_n}{Z} \\
  &\text{subject to } Z \succeq 0,  \; \diag(Z) \preceq \one_n, \; \min_{i,j} Z_{ij} \ge -1/(K-1).
\end{aligned}
\end{equation}
For the value of $\lambda$, we choose the average degree of the graph as in \eqref{eq: lambda}, that is
\begin{equation}							\label{eq: lambda - K}
\l = \frac{2}{n(n-1)} \sum_{i < j} a_{ij}.
\end{equation}

This semidefinite program is a direct generalization of \eqref{eq: SDP lambda}. Indeed, when there are two communities, 
that is $K=2$, the constraint $\min_{i,j} Z_{ij} \ge -1$ can be dropped 
since it follows from $\diag(Z) \preceq \one_n$ \eqref{eq: Mgr+}).

\begin{theorem}[Community detection in the balanced partition model]	\label{thm: community detection - K}
  Let $\e \in (0,1/2)$, $K, s, n$ be integers with $n \ge 10^4 K / \e^2$ and $n = Ks$. 
  Let $A = (a_{i,j}) $ be the adjacency matrix of the random graph 
  drawn from the balanced partition model described above, with $K$ communities of equal sizes $s$.
  Let $p = \frac{a}{s} > q = \frac{b}{s}$ be such that 
  \begin{equation}	\label{eq:ab - K}
  (a-b)^2 \ge 50^2 \e^{-2}  (a+b(K-1)).
  \end{equation}
  Assume also that $\max\left\{ a(1-p), (K-1)b(1-q) \right\} \ge 10$.
Let $\Zhat$ be a solution of the semidefinite program \eqref{eq: SDP lambda - K} 
and $\Phat$ be the orthogonal projection onto the span of the eigenvectors 
associated to the $2K-3$ largest eigenvalues. 
Then, with probability at least $1-  e^3 5^{-n}$, we have
\[
\| \Phat - \Pbar \|_2^2 \le 8 \e \| \Pbar \|_2^2.
\]
\end{theorem}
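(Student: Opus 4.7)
The plan is to adapt the general method of Section~\ref{s: method} to the $K$-community setting, producing a Frobenius approximation of a suitably scaled version of $\Pbar$, and then to use a Davis--Kahan argument to pass to an approximation of eigenprojections. Throughout set $B := A - \l E_n$ and
\[\Mopt := \{Z \succeq 0 : \diag(Z) \preceq \one_n, \; Z_{ij} \ge -1/(K-1) \text{ for all } i,j\},\]
and let $\tilde{P} := (n/(K-1))\Pbar$, which has entries $1$ on same-community pairs and $-1/(K-1)$ elsewhere; equivalently $\tilde{P} = \tfrac{K}{K-1}\Zbar - \tfrac{1}{K-1} E_n$. Evaluating $\tilde{P}$ on the community indicators $\chi_k$ shows that it is positive semidefinite with eigenvalue $n/(K-1)$ of multiplicity $K-1$ on the subspace $\{\sum_k c_k \chi_k : \sum_k c_k = 0\}$ and eigenvalue $0$ on its orthogonal complement; combined with $\diag(\tilde{P}) = \one_n$ and $\min_{i,j} \tilde{P}_{ij} = -1/(K-1)$ this gives $\tilde{P} \in \Mopt$. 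I would take the reference matrix to be $R := \tfrac{(p-q)(K-1)}{K}\,\tilde{P}$. Since the constraints $Z \succeq 0$ and $\diag(Z) \preceq \one_n$ already force $Z_{ij} \le 1$ for every $Z \in \Mopt$, and $R$ is a positive multiple of $\tilde{P}$, an entrywise argument identifies $\tilde{P}$ as the maximizer of $\ip{R}{Z}$ over $\Mopt$: the matrix $\tilde{P}$ saturates the upper bound $Z_{ij}=1$ where $R$ is positive and the lower bound $Z_{ij}=-1/(K-1)$ where $R$ is negative.

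The second step invokes Grothendieck's inequality (Lemma~\ref{lem: almost max}) combined with a distinguishability argument as in Lemma~\ref{lem: distinguishes points}. A direct computation yields $\E A = \tfrac{(p-q)(K-1)}{K}\tilde{P} + \tfrac{p+(K-1)q}{K}E_n + (1-p)\one_n$ and $\E \l = \tfrac{p+(K-1)q}{K} - (p-q)\tfrac{K-1}{K(n-1)}$, so $\E B - R = (1-p)\one_n + (p-q)\tfrac{K-1}{K(n-1)} E_n$, whose $\ell_\infty \to \ell_1$ norm is $O(n + (a-b))$ and is dominated by the fluctuation $\pbar^{1/2}n^{3/2}$. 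A variance calculation shows $\pbar \approx (a(1-p)+(K-1)b(1-q))/n \ge 9/n$ under the hypothesis $\max\{a(1-p),(K-1)b(1-q)\} \ge 10$, so Lemma~\ref{lem: deviation} and Remark~\ref{rem: sum of entries} together give $\|B - R\|_{\infty \to 1} \lesssim \pbar^{1/2} n^{3/2}$ with probability at least $1 - e^3 5^{-n}$. Because $\Zhat \in \Mopt$, the difference $(\tilde{P} - \Zhat)_{ij}$ is nonnegative on same-community pairs, nonpositive on cross pairs, and bounded in absolute value by $K/(K-1)$; this gives $\|\tilde{P} - \Zhat\|_2^2 \le \tfrac{K}{K-1}\|\tilde{P} - \Zhat\|_1$ on the one hand and $\ip{\tilde{P}}{\tilde{P} - \Zhat} \ge \tfrac{1}{K-1}\|\tilde{P} - \Zhat\|_1$ on the other. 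Chaining these estimates with Lemma~\ref{lem: almost max} and substituting $p = a/s$, $q = b/s$, and the hypothesis \eqref{eq:ab - K} yields
\[\|\Zhat - \tilde{P}\|_2^2 \le \frac{2 K_\Gr K^2}{(K-1)(p-q)}\,\|B-R\|_{\infty \to 1} \lesssim \varepsilon\, n^2,\]
where the constant $50^2$ in \eqref{eq:ab - K} is tuned so that the factors $K_\Gr$ and $K/(K-1)$ are absorbed.

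The third and decisive step, which I expect to be the main obstacle, is to convert this Frobenius approximation into an approximation of eigenprojections. Since $\tilde{P}$ has $K-1$ nonzero eigenvalues all equal to $n/(K-1)$ with joint eigenprojection $\Pbar$, the relevant spectral gap is $n/(K-1)$; however, $\|\Zhat - \tilde{P}\|_{\mathrm{op}} \le \|\Zhat - \tilde{P}\|_2 \lesssim \sqrt{\varepsilon}\, n$ need not be smaller than this gap for the full range of $(\varepsilon,K)$ allowed by the theorem, so Weyl's inequality does not by itself isolate the top $K-1$ eigenvalues of $\Zhat$ from the remainder. To absorb this slack I would take $\Phat$ to be the projection onto the top $2K-3$ eigenvectors of $\Zhat$, so that the $K-2$ extra dimensions accommodate eigenvalue crossings that Weyl allows. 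Applying a Frobenius Davis--Kahan bound (in the style of Corollary~3 of \cite{Vu}) at the threshold $n/(2(K-1))$ should place the range of $\Pbar$ inside the range of $\Phat$ up to a $\sin\Theta$-error of order $\|\Zhat - \tilde{P}\|_2 \cdot (K-1)/n$; squaring and using $\|\Pbar\|_2^2 = K-1$ then gives $\|\Phat - \Pbar\|_2^2 \le 8 \varepsilon (K-1) = 8 \varepsilon \|\Pbar\|_2^2$, as required. Setting up the Davis--Kahan argument carefully so as to handle the rank mismatch and to preserve the correct dependence on $K$ in the constants is the genuinely new technical content beyond Sections~\ref{s: community detection} and \ref{s: community detection general}.
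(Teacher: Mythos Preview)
Your outline tracks the paper's argument closely: the same reference matrix $R=\frac{(K-1)(p-q)}{K}\tilde P$, the same identification $Z_R=\tilde P$, the same control of $\|B-R\|_{\infty\to 1}$ via Lemma~\ref{lem: deviation}, and the same Weyl/Davis--Kahan endgame with the projection onto the top $2K-3$ eigenvectors. The one substantive deviation is your ``distinguishes points'' step, and it costs you exactly the factor that makes the final arithmetic fail.

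You route through $\ell_1$: from the sign pattern of $\tilde P-\Zhat$ you get
\[
\ip{\tilde P}{\tilde P-\Zhat}\;\ge\;\tfrac{1}{K-1}\,\|\tilde P-\Zhat\|_1
\qquad\text{and}\qquad
\|\tilde P-\Zhat\|_2^2\;\le\;\tfrac{K}{K-1}\,\|\tilde P-\Zhat\|_1,
\]
which only combine to $\|\tilde P-\Zhat\|_2^2\le K\,\ip{\tilde P}{\tilde P-\Zhat}$. Pushed through Lemma~\ref{lem: almost max} this yields your displayed bound $\|\Zhat-\tilde P\|_2^2\le \frac{2K_\Gr K^2}{(K-1)(p-q)}\|B-R\|_{\infty\to 1}\lesssim \e\,n^2$, which is correct---but it is a factor of $K-1$ weaker than $\e\,n^2/(K-1)=\e\,\|\tilde P\|_2^2$, which is what the Davis--Kahan step actually needs. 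Indeed, with spectral gap $\sim n/(K-1)$ the Frobenius Davis--Kahan bound gives
\[
\|\Pbar(\one_n-\Phat)\|_2^2\;\lesssim\;\|\Zhat-\tilde P\|_2^2\cdot\frac{(K-1)^2}{n^2}\;\lesssim\;\e\,(K-1)^2,
\]
not $\e(K-1)$; your last sentence's jump to $\|\Phat-\Pbar\|_2^2\le 8\e(K-1)$ is therefore off by a factor of $K-1$ and does not give the theorem for unbounded $K$.

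The paper avoids this loss by \emph{not} passing through $\|\cdot\|_1$. It follows Lemma~\ref{lem: Zhat-Zbar} instead of Lemma~\ref{lem: distinguishes points}: since $R$ is a scalar multiple of $Z_R=\tilde P$, Lemma~\ref{lem: almost max} directly controls $\ip{Z_R}{Z_R-\Zhat}$, and the Frobenius expansion $\|\Zhat-Z_R\|_2^2=\|\Zhat\|_2^2+\|Z_R\|_2^2-2\ip{\Zhat}{Z_R}$ then gives
\[
\|\Zhat-Z_R\|_2^2\;\le\;\frac{50\,K\,\pbar^{1/2}n^{3/2}}{(K-1)(p-q)}\;\le\;\e\,\frac{n^2}{K-1},
\]
with the constant $2$ in place of your $K$. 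This is precisely the sharper inequality \eqref{eq:final ineq} that feeds into Davis--Kahan to yield $\|\Phat-\Pbar\|_2^2\le 8\e(K-1)$. So the fix is to swap your Lemma~\ref{lem: distinguishes points}-style detour for the Lemma~\ref{lem: Zhat-Zbar}-style expansion; everything else in your plan, including the Weyl-based justification of the rank-$(2K-3)$ projection, matches the paper.
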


\begin{remark}
  In the case of two communities, that is $K=2$, 
  Theorem~\ref{thm: community detection - K} yields the same conclusion as 
  Corollary \ref{cor: vector recovery}.
\end{remark}

\begin{remark}
It comes from the proof that if $\e \le 1/\sqrt{K-1}$, then the same conclusion holds true with $\Phat$ replaced by the orthogonal projection onto the span of the eigenvectors 
associated to the $K-1$ largest eigenvalues of $\Zhat$, thus making $\Pbar$ and $\Phat$ of the same rank.
\end{remark}

\begin{proof}
We build all the steps in one proof. 
Without loss of generality we assume that $\CC_i = \{(i-1)s +1, \ldots, is \}$ 
for all  for all $i = 1, \ldots, K$. Then $\Pbar$ is the block matrix that has the following form: 
\[
\bar{P} =
 \begin{bmatrix}
  \frac{K-1}{sK} E_{s}    & - \frac{1}{sK} E_{s}                      & \cdots                      &  - \frac{1}{sK} E_{s}  \\
  - \frac{1}{sK} E_{s}                    &  \frac{K-1}{sK}  E_{s}      &  - \frac{1}{sK} E_{s} & \cdots \\
  \vdots                                          & \vdots                                            & \vdots                         &
 \end{bmatrix}
\]
In other words, $\bar{P}$ has $K^2$ blocks of size $s \times s$. 
The diagonal blocks equal $ \frac{K-1}{sK} E_s$, the off-diagonal blocks equal $- \frac{1}{sK} E_{s}$, 
where as usual $E_{s}$ denotes the $s \times s$ matrix whose all entries equal $1$. 
Since $E_s^2 = s E_s$, it is easily seen that $\Pbar^2 = \Pbar$, 
so $\Pbar$ is an orthogonal projection of rank $K-1$. 
We define the reference matrix $R$ as follows:
\begin{equation}         \label{eq: R - K}
R := s(p-q) \Pbar = \frac{(K-1)(p-q)}{K}
\begin{bmatrix}
  \phantom{-}E_{s}    & - \frac{1}{K-1} E_{s}                      & \cdots                      &  - \frac{1}{K-1} E_{s}  \\
  - \frac{1}{K-1} E_{s}                    & \phantom{-}E_{s}      &  - \frac{1}{K-1} E_{s} & \cdots \\
  \vdots                                          & \vdots                                            & \vdots                         &
 \end{bmatrix}.
\end{equation}

Let us compute the expectated values of $\l$ and $A$. 
By definition \eqref{eq: lambda - K} of $\l$, we have
\[
\E \l = \left( \frac{1}{K} p + \frac{K-1}{K} q \right) + \frac{(K-1)(p-q)}{K(n-1)} = : \l_{pq} + \frac{(K-1)(p-q)}{K(n-1)}.
\]
Furthermore,
$$
\E A = \Abar + (1-p) \one_n, 
\quad \text{where} \quad
\Abar := 
\begin{bmatrix}
  p E_{s}    & q E_{s}                      & \cdots                      &  q E_{s}  \\
  q E_{s}                    & p E_{s}      &  q E_{s} & \cdots \\
  \vdots                                          & \vdots                                            & \vdots                         &
 \end{bmatrix}.
$$
(In the definition of $\bar{A}$, the diagonal blocks equal  $pE_s$ and all off diagonal blocks equal $qE_s$.) 
The difference between $\E A$ and $\Abar$ comes from the requirement that the diagonal entries of $A$ equal one. The matrix $R$ is chosen such that 
$$
\Abar - \l_{pq} E_n = R.
$$ 
One can thus check that 
$$
A - \l E_n - R  
= (A - \E A) - (\l - \E \l) E_n + (1-p) \one_n -  \frac{(K-1)(p-q)}{K(n-1)} E_n .
$$
We know from Lemma~\ref{lem: deviation} that if 
\[							
  \pbar := \frac{2}{n(n-1)} \sum_{i<j}^n \Var(a_{ij}) > \frac{9}{n}
\]
then 
with probability larger than $1 - e^3 5^{-n}$, 
\[
\|A - \E A\|_{\infty \to 1} \le 3 \pbar^{1/2} n^{3/2} 
\ \hbox{and} \
|\l - \E \l | \le 3 \pbar^{1/2} n^{-1/2}.
\]
Hence
\begin{align}	
\nonumber
\| A - \l E_n - R  \|_{\infty \to 1} &\le 6 \pbar^{1/2} n^{3/2} + (1-p) n + \frac{(K-1)(p-q)}{K(n-1)} n^2 
\\
\label{eq:norm difference}
&\le 14 \pbar^{1/2} n^{3/2}.
\end{align}

%The reference matrix $R$ has two nice properties. 
%The first one is its eigenvectors structure. It completely determines the communities $\CC_1, \ldots, \CC_K$. Let $\o \in \mathbb{C}$ the $s$-th root of  unity, $\o = e^{2i\pi/s}$. The matrix $E_s$ is rank one, its largest eigenvalue $s$ is associated to the eigenvector $u_0 :=(1, \ldots, 1)^\tran$ while its 0 eigenvalue is associated to the eigenspace span by the vectors $u_i := (1, \o^{i}, \ldots, \o^{i(s-1)})^\tran$ where $i$ runs over $\{1, \ldots, s-1\}$. Let $\theta \in \C$ be the $K$-th root of unity, $\theta = e^{2i\pi/K}$. An eigenvector basis of $R$ is given by the vectors $v_{ik}$ defined as 
%\[
%(u_i^\tran, \theta^{k} u_i^\tran, \ldots, 
%\theta^{k(K-1)} u_i^\tran )^\tran
%\] 
%where $i$ run over $\{0, \ldots, s-1\}$ and $k$ runs over $\{0, \ldots, K-1\}$. If $i \ne 0$ and if $i=k=0$ then the associate eigenvalue equals 0, while for $i=0$ and $k \in \{1, \ldots, K-1\}$, we have 
%\begin{align}
%\label{eq:eigenvector}
%  &v_{0k} = ((1,  \ldots, 1), \theta^{k} (1,  \ldots,  1), \ldots, \theta^{k(K-1)}(1,  \ldots,  1) )^\tran \\
%\nonumber
%  & R v_{0k} = \frac{(K-1)(p-q)}{K}  \left(s - \frac{s}{K-1} \sum_{j=1}^{K-1} \theta^{jk} \right) v_{0k}= s(p-q) v_{0k}
%\end{align}
%since by definition of $\theta$, $\sum_{j=1}^{K-1} \theta^{jk} +1 = 0$.  It is important to remark that any eigenvector $v$ associated to the eigenvalue $ s(p-q)$ classify the communities since two vertices $i, j$ belong to the same community if and only if $v_i = v_j$. 

The second step of the argument consists of determining the form of the maximizer 
$$
Z_R := \arg \max_{Z \in \Mopt} \ip{R}{Z}
$$
where 
\[
\Mopt := \big \{ Z \succeq 0,  \; \diag(Z) = \one_n, \;  \min_{i,j} Z_{ij} \ge - 1/(K-1) \big\}.
\]
Note that all $Z \in \Mopt$ automatically satisfy $Z \in [-1/(K-1),1]^{n \times n}$ by \eqref{eq: Mgr+}.
As in Lemma \ref{lem: Zbar}, we can check that $Z_R$ is a multiple of $\Pbar$, more precisely
\begin{equation}	 \label{eq:def ZR}
  Z_R = \frac{sK}{K-1} \Pbar = 
  \begin{bmatrix}
  \phantom{-}E_{s}    & - \frac{1}{K-1} E_{s}                      & \cdots                      &  - \frac{1}{K-1} E_{s}  \\
  - \frac{1}{K-1} E_{s}                    & \phantom{-}E_{s}      &  - \frac{1}{K-1} E_{s} & \cdots \\
  \vdots                                          & \vdots                                            & \vdots                         &
 \end{bmatrix}.
\end{equation}
(Note that since $\Pbar$ is an orthogonal projection, we have $Z_R \succeq 0$. Hence $Z_R \in \Mopt$. The rest of the proof works as in Lemma \ref{lem: Zbar}.)

To bound the error, we establish as in Lemma \ref{lem: Zhat-Zbar} that the maximizer of the 
actual objective function, 
$$
\Zhat = \arg \max_{Z \in \Mopt} \ip{A - \l E_n}{Z},
$$
must be close to $Z_R$, the maximizer of the {\em reference} objective function.
Since $Z_R =  \frac{K}{(K-1)(p-q)} R$, we proceed identically using Lemma \ref{lem: almost max} and the bound for the $\ell_\infty \to \ell_1$ norm proved in \eqref{eq:norm difference} and we obtain
\begin{equation} 	\label{eq:main ineq}
\|\Zhat - Z_R\|_2^2 
\le \frac{50 \, K \,  \pbar^{1/2} n^{3/2}}{(K-1)(p-q)}
\end{equation}
under the condition that 
\[					
  \pbar := \frac{2}{n(n-1)} \sum_{i<j}^n \Var(a_{ij}) > \frac{9}{n}.
\]
In our case,
\[
\pbar = \frac{n}{n-1} \left(  \frac{1}{K} \, p(1-p) \left(1-\frac{1}{s} \right) +  \frac{K-1}{K} \,q (1-q) \right).
\]
We see that
\[
\pbar \ge \max\left\{ \frac{p(1-p)}{K}, \frac{(K-1)q(1-q)}{K} \right\} - \frac{1}{4(n-1)} > \frac{9}{n}
\]
if $\max\left\{ p(1-p), (K-1)q(1-q) \right\} \ge 10\, K / n$. Moreover,
\begin{align*}
\pbar - \left(\frac{p}{K} + \frac{(K-1) q}{K} \right)
& = \ \pbar - \left(\frac{p(1-p)}{K} + \frac{(K-1) q(1-q)}{K} \right) - \frac{p^2}{K} - \frac{(K-1)q^2}{K}
\\
& = \ \frac{K-1}{K(n-1)}\left( q(1-q) - p(1-p) \right) - \frac{p^2}{K} - \frac{(K-1)q^2}{K}
\\
& \le  \frac{(K-1)p^2}{K(n-1)} - \frac{p^2}{K}	= 	\frac{(K-n)p^2}{K(n-1)} \le 0
\end{align*}
since $q (1-q) < p$ and $K \le n$. 

We get that
\[
\frac{9}{n} \le \pbar \le \left(\frac{p}{K} + \frac{(K-1) q}{K} \right) = \l_{pq}.
\]
Recall that $a >1$ and $b<a$ are such that $p=a/s = aK/n$, $q=b/s = bK/n$. 
Together with \eqref{eq:main ineq}, this yields 
\[
\|\Zhat - Z_R\|_2^2 
\le 50 \, \frac{\sqrt{a + b(K-1)}}{a-b}  \frac{n^2}{K-1}.
\]
Observe also that $\| Z_R\|_2^2 = \frac{n^2}{K-1}$. 
With \eqref{eq:ab - K}, we have proved that 
\begin{equation}	\label{eq:final ineq}
\|\Zhat - Z_R\|_2^2 \le \e  \| Z_R\|_2^2 = \e \frac{n^2}{K-1}.
\end{equation}
For a symmetric $n \times n$ matrix $M$,  $\{\l_1(M), \ldots, \l_n(M)\}$ is the set of eigenvalues of $M$ arranged in decreasing order, that is $\l_1(M) \ge \ldots \ge \l_n(M)$. 
Recall that $Z_R = \frac{sK}{K-1} \Pbar$ is a multiple of an orthogonal projection of rank $(K-1)$ hence 
\begin{equation}	\label{eq:spectra}
\l_1(Z_R) = \ldots = \l_{K-1}(Z_R) = \frac{sK}{K-1}, \
\l_K(Z_R) = \ldots = \l_n(Z_R) =0.
\end{equation}
From Weyl's inequalities, see for example Theorem III.2.1 in \cite{Bhatia}, we have
\[
0  \le  \l_{2K-2}(\Zhat) \le \l_K (Z_R) + \l_{K-1} (\Zhat- Z_R) = \l_{K-1} (\Zhat- Z_R)
\]
since $\Zhat \succeq 0$ and $\l_K(Z_R)=0$. Hence $\l_1(\Zhat- Z_R) \ge \ldots \ge \l_{K-1} (\Zhat- Z_R) \ge 0$ and we deduce from \eqref{eq:final ineq} that
\[
(K-1) \l_{K-1}^2(\Zhat - Z_R) \le \sum_{i=1}^{K-1} \l_i^2(\Zhat- Z_R) \le \|\Zhat - Z_R\|_2^2 \le \e   \frac{n^2}{K-1} .
\]
Therefore
\begin{equation} 	\label{eq:spectra Zhat}
0 \le \l_n(\Zhat) \le \ldots \le \l_{2K-2}(\Zhat) \le \sqrt \e \frac{n}{K-1} = \sqrt \e \frac{sK}{K-1}.
\end{equation}
By definition, $\Phat$ is the orthogonal projection onto the span of the eigenvectors of $\Zhat$ associated to the eigenvalues $\{\l_1(\Zhat), \ldots, \l_{2K-3}(\Zhat)\}$ and $(\one_n - \Phat)$ is the orthogonal projection onto the span of the eigenvectors of $\Zhat$ associated to the eigenvalues $\{\l_{2K-2}(\Zhat), \ldots, \l_{n}(\Zhat)\}$. Let
\[
S_1 = \{\frac{sK}{K-1} \}
\quad
\text{and}
\quad
S_2 = [0, \sqrt \e \frac{sK}{K-1}].
\]
From \eqref{eq:spectra}, the projection onto the span of the eigenvectors of $Z_R$ whose eigenvalues belong to $S_1$ equals $\Pbar$, thanks to the observation \eqref{eq:def ZR}. From \eqref{eq:spectra Zhat}, the projection onto the span of the eigenvectors of $\Zhat$ whose eigenvalues belong to $S_2$ equal $(\one_n -\Phat)$. And the gap between $S_1$ and $S_2$ equals $(1-\sqrt \e)\frac{sK}{K-1}$.
We deduce from a well-known theorem due to Davis and Kahan \cite{DK}, see for example Theorem VII.3.1 in \cite{Bhatia}, that
\[
\| \Pbar (\one_n - \Phat)\|_2 \le \frac{\|Z_R - \Zhat||_2}{(1-\sqrt \e)\frac{sK}{K-1}}.
\]
Observe that 
\[
\|\Pbar - \Phat\|_2^2 = 2 \| \Pbar (\one_n - \Phat)\|_2^2
\
\text{and} 
\
\| \Pbar \|_2^2 = K-1
\]
and the conclusion of  Theorem \ref{thm: community detection - K} follows from \eqref{eq:final ineq}.
\end{proof}

\end{document}